\numberwithin{equation}{section}
\numberwithin{figure}{section}
\theoremstyle{plain}
\newtheorem{thm}{\protect\theoremname}
\theoremstyle{plain}
\newtheorem{prop}[thm]{\protect\propositionname}
\theoremstyle{plain}
\newtheorem{lem}[thm]{\protect\lemmaname}
\theoremstyle{remark}
\newtheorem{rem}[thm]{\protect\remarkname}
\theoremstyle{plain}
\newtheorem{cor}[thm]{\protect\corollaryname}
\theoremstyle{definition}
\newtheorem{example}[thm]{\protect\examplename}
\providecommand{\corollaryname}{Corollary}
\providecommand{\examplename}{Example}
\providecommand{\lemmaname}{Lemma}
\providecommand{\propositionname}{Proposition}
\providecommand{\remarkname}{Remark}
\providecommand{\theoremname}{Theorem}
\begin{document}

\title{On (ultra-) completeness numbers and (pseudo-) paving numbers}

\author{Frédéric Mynard}
\begin{abstract}
We study the completeness and ultracompleteness numbers of a convergence
space. In the case of a completely regular topological space, the
completeness number is countable if and only if the space is \v{C}ech-complete,
and the ultracompleteness number is countable if and only if the space
is ultracomplete. We show that the completeness number of a space
is equal to the pseudopaving number of the upper Kuratowski convergence
on the space of its closed subsets, at $\emptyset$. Similarly, the
ultracocompleteness number of a space is equal to the paving number
of the upper Kuratowski convergence on the space of its closed subsets,
at $\emptyset$.
\end{abstract}

\address{New Jersey City University, 2039 J. F. Kennedy Blvd, Jersey City,
NJ 07305}

\email{fmynard@njcu.edu}

\keywords{\v{C}ech-complete, ultracomplete, cofinally \v{C}ech-complete,
paving number, pseudopaving number, upper Kuratowski convergence,
$\sigma$-compact, hemicompact, space of ultrafilters}

\subjclass[2000]{54A20, 54A25, 54D80, 54D45}

\thanks{This work was supported in part by the Separately Budgeted Research
program of NJCU for the academic years 2017-2018 and 2018-2019.}

\maketitle
\global\long\def\G{\mathcal{G}}
 \global\long\def\F{\mathcal{F}}
 \global\long\def\H{\mathcal{H}}
 \global\long\def\L{\operatorname{L}}
\global\long\def\U{\mathcal{U}}
\global\long\def\W{\mathcal{W}}
 \global\long\def\P{\mathcal{P}}
 \global\long\def\B{\mathcal{B}}
 \global\long\def\A{\mathcal{A}}
\global\long\def\D{\mathcal{D}}
\global\long\def\O{\mathcal{O}}
 \global\long\def\N{\mathcal{N}}
 \global\long\def\X{\mathcal{X}}
 \global\long\def\lm{\mathrm{lim}}
 \global\long\def\then{\Longrightarrow}

\global\long\def\V{\mathcal{V}}
\global\long\def\C{\mathcal{C}}
\global\long\def\adh{\mathrm{\mathrm{adh}}}
\global\long\def\Seq{\mathrm{Seq\,}}
\global\long\def\intr{\mathrm{int}}
\global\long\def\cl{\mathrm{cl}}
\global\long\def\inh{\mathrm{inh}}
\global\long\def\diam{\mathrm{diam}}
\global\long\def\card{\mathrm{card\,}}
\global\long\def\S{\operatorname{S}}
\global\long\def\T{\operatorname{T}}
\global\long\def\I{\operatorname{I}}
\global\long\def\BaseD{\operatorname{B}_{\mathbb{D}}}
\global\long\def\AdhD{\operatorname{A}_{\mathbb{D}}}
\global\long\def\K{\operatorname{K}}
\global\long\def\compl{\operatorname{compl}}
\global\long\def\scompl{\operatorname{ucompl}}

\global\long\def\Epi{\operatorname{Epi}}
\global\long\def\De{\operatorname{D}_{1}}
\global\long\def\st{\operatorname{st}}

\section{Introduction}

E. \v{C}ech generalized the traditional metric notion of completeness
to the topological setting in \cite{cech.bicompact}, calling a topological
space \emph{topologically complete }if it is a $G_{\delta}$-subset
of a compact Hausdorff space. This notion is now usually called \emph{\v{C}ech-completeness}. 

Z. Frolík introduced in \cite{Frolik_G} the notion of a $G_{\delta}$-\emph{space},
that is, a topological space that is $G_{\delta}$ in every Hausdorff
space in which it is dense. He proved that the notion is equivalent
to that of \v{C}ech for completely regular (\footnote{In this paper, we assume that completely regular spaces, also called
\emph{functionally regular }in \cite{DM.book}, are Hausdorff.}) spaces, and that it can be characterized in terms of the existence
of a complete sequence of covers (\footnote{Namely, a sequence of open cover of the space with the property that
any filter with a filter-based formed by open sets that contains an
element of each cover has non-empty adherence.}). Importantly, he generalized the notion to that of $G(\mathfrak{m})$-\emph{space}
for an arbitrary cardinal $\mathfrak{m}$ (\footnote{a topological space that is the intersection of $\mathfrak{m}$ many
open subsets, in every Hausdorff space in which it is dense}). For completely regular spaces, compactness is recovered when $\mathfrak{m}=0$,
local compactness (without compactness) when $\mathfrak{m}=1$, and
\v{C}ech-completeness when $\mathfrak{m}=\aleph_{0}$.

S. Dolecki was the first to realize \cite{D.covers} that such notions
of completeness naturally extend from the setting of topological spaces
to the wider context of convergence spaces, and that completeness
is most naturally studied in that setting. He also introduced a \emph{completeness
number,} which, in the case of a completely regular topological space,
is the least cardinal $\mathfrak{m}$ for which the space is a $G(\mathfrak{m})$-space
in the sense of Frolík. 

One of the main reasons to study topological problems in the larger
context of convergence spaces is the availability of a canonical function
space structure (the so-called \emph{continuous convergence} \cite{Binz,BB.book},
also called \emph{natural convergence} \cite{DM.book}) satisfying
the exponential law. Hence the question arises naturally whether this
new cardinal invariant of a space (completeness number) can be related
to a cardinal invariant of a dual space, where duality is considered
with respect to a function space endowed with the natural convergence.
Dolecki followed this line of investigation with \mbox{\cite[Theorem 10.1]{D.covers}},
relating the completeness number of a convergence space $(X,\xi)$
with the local structure at the empty set of the set of closed subsets
of $\xi$ equipped with the upper Kuratowski convergence -- a space
that appears naturally in various contexts (see, e.g., \cite{DGL.kur})
and can be identified with the space $[\xi,\$]$ of continuous functions
from $(X,\xi)$ to the Sierpinski space $\$$, endowed with the natural
convergence. While the approach was very insightful, it turns out
that the result is not correct as stated.

More specifically, \cite[Theorem 10.1]{D.covers} stated that the
completeness number of a convergence space $(X,\xi)$ is the \emph{paving
number }of $[\xi,\$]$ at $\emptyset$, that is, the minimal number
of filters necessary to determine the convergence at $\emptyset$
(See Section \ref{sec:(Pseudo)paving-number} for a proper definition).
It turns out that the paving number of $[\xi,\$]$ at $\emptyset$
is equal to the \emph{ultracompleteness number }of $(X,\xi)$ (Theorem
\ref{thm:scompldual}). Topological spaces with countable ultracompleteness
number have been called cofinally \v{C}ech-complete \cite{romaguera1998cofinally},
strongly complete \cite{ponomarev1987countable}, and ultracocomplete\emph{
}\cite{bijagoar2001ultracomplete} and have been extensively studied. 

On the other hand, the completeness number of $(X,\xi)$ is equal
to the \emph{pseudopaving number }of $[\xi,\$]$ at $\emptyset$ (Theorem
\ref{thm:compldual}), another natural local invariant for convergence
spaces. 

The key results are in Section \ref{sec:Duality-theorems}. To prepare
for it, after introducing basic conventions and notions of convergence
spaces (Section \ref{sec:Notations-and-conventions}), we introduce
and study the completeness and ultracompleteness numbers (Section
\ref{sec:Completeness-and-ultracompletene}). 

It turns out that our convergence-theoretic approach allows us to
refine \cite[Proposition 2]{garcia1999perfect} of Garcia and Romaguera,
which states that the open continuous image of an ultracomplete topological
space is ultracomplete, in contrast to \v{C}ech-complete spaces.
We show that it is enough for the map to be biquotient. 

We then turn to the pseudopaving and paving numbers of a convergence
(\ref{sec:(Pseudo)paving-number}), before particularizing these notions
to the $\$$-dual $[\xi,\$]$ (Section \ref{sec:-dual-of-a}).

We use notations and terminology consistent with the recent book \cite{DM.book},
to which we refer the reader for a comprehensive treatment of convergence
spaces. We only give the strictly necessary convergence-theoretic
definitions in the next section.

\section{\label{sec:Notations-and-conventions}Notations and conventions}

\subsection{Set-theoretic conventions}

Let $2^{X}$ denote the powerset of $X$. If $\P\subset2^{X}$, let
\[
\P_{c}:=\left\{ X\setminus P:P\in\P\right\} ,
\]
\[
\P^{\cup}:=\left\{ \bigcup_{P\in\A}P:\A\subset\P,\card\A<\infty\right\} \text{ and }\P^{\cap}:=\left\{ \bigcap_{P\in\A}P:\A\subset\P,\card\A<\infty\right\} ,
\]
\[
\P^{\uparrow}:=\left\{ A\subset X:\exists P\in\P,P\subset A\right\} \text{ and }\P^{\downarrow}:=\left\{ A\subset X:\exists P\in\P,A\subset P\right\} .
\]
 If $\mathbb{D}\subset2^{2^{X}}$, we write $\mathbb{D}^{a}=\left\{ \D^{a}:\D\in\mathbb{D}\right\} $,
where $a$ ranges over $\cup$, $\cap$, $\uparrow$ and $\downarrow$. 

If $a:2^{X}\to2^{X}$ and $\A\subset2^{X}$, we write 
\[
a^{\natural}\A=\{a(A):A\in\A\}.
\]

We say that two families $\A$ and $\B$ of subsets of $X$ \emph{mesh},
in symbols $\A\#\B$, if $A\cap B\neq\emptyset$ whenever $A\in\A$
and $B\in\B$. We also write 
\[
\A^{\#}=\{B\subset X:\{B\}\#\A\}.
\]
 A family $\D\subset2^{X}$ of non-empty subsets of $X$ is a \emph{filter
}if $\D=\D^{\cap\uparrow}$ and a family $\P$ of proper subsets of
$X$ is an \emph{ideal }if $\P=\P^{\cup\downarrow}$. Of course, $\D$
is a filter if and only if $\D_{c}$ is an ideal.

We denote by $\mathbb{F}X$ the set of filters on $X$, which we order
by the inclusion order of families of subsets of $X$. Maximal filters
are called \emph{ultrafilters}. We denote by $\mathbb{U}X$ the set
of ultrafilters on $X$ and, given $\F\in\mathbb{F}X$, by $\beta\F$
the set of ultrafilters that are finer than $\F$.

\subsection{Convergences}

A \emph{convergence} $\xi$ on a set $X$ is a relation between filters
on $X$ and points of $X$, where we write 
\[
x\in\lm_{\xi}\F
\]
 if $\F$ and $x$ are $\xi$-related, subject to the conditions that
$x\in\lim_{\xi}\{x\}^{\uparrow}$ for every $x\in X$ and that $\lim_{\xi}\F\subset\lim_{\xi}\G$
whenever $\F\leq\G$. We denote by $|\xi|$ the underlying set of
a convergence $\xi$, that is, if $(X,\xi)$ is a convergence space,
$|\xi|=X$. A function $f:|\xi|\to|\tau|$ is \emph{continuous }if
\[
f(\lm_{\xi}\F)\subset\lm_{\tau}f[\F]
\]
for every $\F\in\mathbb{F}|\xi|$, where $f[\F]=\{B\subset|\tau|:f^{-}(B)\in\F\}$. 

Given two convergences $\xi$ and $\theta$ with the same underlying
set, we say that $\xi$ is \emph{finer than $\theta$} or that $\theta$
is \emph{coarser than $\xi$, }in symbols $\xi\geq\theta$, if $\lim_{\xi}\F\subset\lim_{\theta}\F$
for every filter $\F$.

A topology can be seen as a particular kind of convergence. Indeed,
a topology $\tau$ defines a convergence 
\[
x\in\lm_{\tau}\F\iff\F\geq\N_{\tau}(x),
\]
where $\N_{\tau}(x)$ denotes the neighborhood filter of $x$ for
the topology $\tau$. This convergence completely and uniquely determines
the topology, which can thus be identified with the convergence. Such
a convergence is called \emph{topological.}

A subset $C$ of $|\xi|$ is \emph{closed }if $\lim\F\subset C$ whenever
$C\in\F^{\#}$. The collection of closed sets satisfies the properties
of the collection of closed sets for a topology, which we denote by
$\T\xi.$ This is the finest topological convergence that is coarser
than $\xi$. The map $\T$ is a functor, indeed a concrete reflector.

A convergence is a \emph{pseudotopology }if 
\[
\bigcap_{\U\in\beta\F}\lim\U\subset\lim\F.
\]
Given a convergence $\xi$ there is a finest pseudotopology $\S\xi$
that is coarser than $\xi$, which is called \emph{pseudotopological
modification of $\xi$} and is defined by 
\[
\lm_{\S\xi}\F=\bigcap_{\U\in\beta\F}\lm_{\xi}\U.
\]
The map $\S$ is a concrete reflector and $\S\leq\T$, for every topology
is a pseudotopology.

The \emph{adherence of a filter }$\F$ for a convergence $\xi$ is
\[
\adh_{\xi}\F=\bigcup_{\mathbb{F}|\xi|\ni\H\#\F}\lm_{\xi}\H=\bigcup_{\U\in\beta\F}\lm_{\xi}\U.
\]
Evidently, $\adh_{\S\xi}=\adh_{\xi}$ and thus $\lim_{\S\xi}\F=\bigcap_{\mathbb{F}|\xi|\ni\H\#\F}\adh_{\xi}\H$.

A subset $A$ of $|\xi|$ is called \emph{compactoid }if $\lim_{\xi}\U\neq\emptyset$
whenever $A\in\U$ and $\U\in\mathbb{U}|\xi|$, and \emph{compact
}if $\lim_{\xi}\U\cap A\neq\emptyset$ whenever $A\in\U$ and $\U\in\mathbb{U}|\xi|$.
The family $\mathcal{K}$ of compactoid subsets (of a given convergence
space) is clearly an ideal, hence the family $\mathcal{K}_{c}$ of
complements is a (possibly degenerate if the convergence is compact)
filter called \emph{cocompactoid filter}.

Given $f:|\xi|\to Y$, there is a finest convergence on $Y$ making
$f$ continuous, which is called \emph{final convergence for $f$
and $\xi$ }and is denoted $f\xi$. Dually, given $f:X\to|\tau|$,
there is a coarsest convergence on $X$ making $f$ continuous, which
is called \emph{initial convergence for $f$ and $\tau$ }and is denoted
$f^{-}\tau$. Note that a map $f:|\xi|\to|\tau|$ is continuous if
and only if $\tau\leq f\xi$ if and only if $f^{-}\tau\leq\xi$. 

An onto map $f:|\xi|\to|\tau|$ is convergence quotient, or \emph{almost
open}, if $\tau\geq f\xi$ so that if $f$ is continuous and almost
open then $\tau=f\xi$. We say that $f$ is \emph{biquotient }if $\tau\geq\S f\xi$
and \emph{quotient }if $\tau\geq\T f\xi$. Of course, if $\xi$ and
$\tau$ are topological, then $f$ is almost open, biquotient or quotient
in this sense if and only if it is in the classical sense of \cite{quest}. 

\section{\label{sec:Completeness-and-ultracompletene}Completeness and ultracompleteness}

\subsection{Basic notions}

A family $\P$ of subsets of a convergence space $(X,\xi)$ is a \emph{$\xi$-cover
}if $\F\cap\P\neq\emptyset$ for every convergent filter $\F$, and
a $\xi$-\emph{pseudocover }if this condition is restricted to convergent
ultrafilters $\F$. 

Given a family $\mathbb{P}$ of covers, a filter $\F$ is $\mathbb{P}$-\emph{Cauchy}
is $\F\cap\P\neq\emptyset$ for every $\P\in\mathbb{P}$ and $\mathbb{P}$-\emph{preCauchy
}if $\F^{\#}\cap\P\neq\emptyset$ for every $\P\in\mathbb{P}$. A
family $\mathbb{P}$ of $\xi$-covers is called \emph{complete }if
every $\mathbb{P}$-Cauchy filter has non-empty $\xi$-adherence,
and \emph{ultracomplete} if every $\mathbb{P}$-preCauchy filter has
non-empty $\xi$-adherence. 

Recall that a completely regular topological space is \v{C}ech-complete
if and only if it has a countable complete collection of (open) covers.
See for instance, \cite{Frolik_G} (\footnote{Frolik asks for every $\mathbb{P}$-Cauchy filter that additionally
has a filter-base composed of open sets to have adherence points.
Whether we omit the filter-base composed of open sets condition or
not results in an equivalent notion in topological spaces.}), \cite[Theorem 3.9.2]{Eng}, \cite{D.covers}. 

On the other hand, completely regular spaces with a countable ultracomplete
collection of open covers have been called \emph{cofinally} \emph{\v{C}ech-complete}
\cite{romaguera1998cofinally}, \emph{strongly complete} \cite{ponomarev1987countable},
and \emph{ultracocomplete }\cite{bijagoar2001ultracomplete}. We refer
the reader to \cite{jardon2016wats} for a good survey on ultracomplete
spaces (\footnote{In the uniform context, the notion goes back to \cite{howes1971completeness}
and \cite{csaszar1975strongly}.}). 

Note that the usual assumption that the space be completely regular
is irrelevant, and the notions at hand can be considered for general
convergence spaces, as they have been in \cite{D.covers}. We call
a convergence space \emph{countably (ultra)complete }if it admits
a countable (ultra)complete collection of covers.

S. Dolecki showed in \cite{D.covers} that completeness can be reformulated
entirely in terms of filters. Ultracompleteness can be characterized
similarly, proceeding along the lines of \cite{D.covers}. Namely,

\begin{prop}
\cite[Proposition 4.2]{D.covers} \label{prop:acompleteidealcover}
A family $\mathbb{P}$ of covers is complete if and only if $\mathbb{P}^{\cup\downarrow}$
is.
\end{prop}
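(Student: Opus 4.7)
The plan is to establish the two implications separately. The backward implication, that completeness of $\mathbb{P}^{\cup\downarrow}$ forces completeness of $\mathbb{P}$, is the routine half: since $\P\subset\P^{\cup\downarrow}$ for every $\P\in\mathbb{P}$, any $\mathbb{P}$-Cauchy filter $\F$ satisfies $\F\cap\P^{\cup\downarrow}\neq\emptyset$ for each $\P\in\mathbb{P}$, hence is $\mathbb{P}^{\cup\downarrow}$-Cauchy, and its $\xi$-adherence is nonempty by hypothesis. (One should also note, for the statement to make sense, that each $\P^{\cup\downarrow}$ remains a $\xi$-cover as soon as $\P$ is, since it contains $\P$.)

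The forward implication requires passing to ultrafilters. Given a $\mathbb{P}^{\cup\downarrow}$-Cauchy filter $\F$, I would pick an arbitrary $\U\in\beta\F$. Trivially $\U$ is still $\mathbb{P}^{\cup\downarrow}$-Cauchy, so for each $\P\in\mathbb{P}$ there is $U_{\P}\in\U$ with $U_{\P}\subset P_{1}\cup\cdots\cup P_{n}$ for some finitely many $P_{i}\in\P$. Upward closure of $\U$ places $P_{1}\cup\cdots\cup P_{n}$ in $\U$, and the ultrafilter property then forces at least one $P_{i}$ to lie in $\U$. Thus $\U\cap\P\neq\emptyset$ for every $\P\in\mathbb{P}$, i.e., $\U$ is $\mathbb{P}$-Cauchy.

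Completeness of $\mathbb{P}$ then yields $\lim_{\xi}\U=\adh_{\xi}\U\neq\emptyset$, and because $\U\geq\F$ implies $\lim_{\xi}\U\subset\adh_{\xi}\F$, we conclude $\adh_{\xi}\F\neq\emptyset$, as required.

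The only conceptual obstacle is recognizing why the argument cannot be carried out directly with $\F$: knowing $F\subset P_{1}\cup\cdots\cup P_{n}$ with $F\in\F$ does not let us extract a single $P_{i}\in\F$, since $\F$ need not decide finite unions. This is precisely why an arbitrary finer ultrafilter must be invoked, and this is the only nonroutine step in the proof.
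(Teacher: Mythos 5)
Your proof is correct. The paper gives no proof of this proposition (it is quoted from Dolecki's paper), so there is nothing to compare line by line; your argument --- pass to an arbitrary $\U\in\beta\F$, use the ultrafilter dichotomy to turn membership of a finite union $P_{1}\cup\dots\cup P_{n}$ into membership of some $P_{i}$, deduce that $\U$ is $\mathbb{P}$-Cauchy, and pull back $\adh_{\xi}\U=\lm_{\xi}\U\neq\emptyset$ to $\adh_{\xi}\F\neq\emptyset$ via $\U\geq\F$ --- is the standard one, and is exactly the Cauchy counterpart of the paper's own proof of Lemma \ref{lem:preCauchy}, where the same finite-union dichotomy is carried out in the grill $\F^{\#}$ rather than in an ultrafilter. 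Your closing remark also correctly isolates the one genuine subtlety: unlike the preCauchy case, the conditions ``$\mathbb{P}$-Cauchy'' and ``$\mathbb{P}^{\cup\downarrow}$-Cauchy'' are \emph{not} equivalent for a general filter, which is why the equivalence holds only at the level of completeness (via adherence and finer ultrafilters) and not at the level of the Cauchy conditions themselves.
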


\begin{lem}
\label{lem:preCauchy} A filter is $\mathbb{P}$-preCauchy if and
only if it is $\mathbb{P}^{\cup\downarrow}$-preCauchy.
\end{lem}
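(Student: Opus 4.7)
The plan is to unpack the definitions and prove the two implications separately, treating the easy direction first. If $\F$ is $\mathbb{P}$-preCauchy, then for each $\P \in \mathbb{P}$ there is some $P \in \P$ with $\{P\}\#\F$. Since each $P \in \P$ belongs to $\P^{\cup\downarrow}$ (it is a trivial one-term union contained in itself), the same $P$ witnesses $\F^{\#}\cap\P^{\cup\downarrow}\neq\emptyset$, so $\F$ is $\mathbb{P}^{\cup\downarrow}$-preCauchy.

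The backward implication is the substantive direction. Assume $\F$ is $\mathbb{P}^{\cup\downarrow}$-preCauchy and fix $\P \in \mathbb{P}$. There exists $Q \in \P^{\cup\downarrow}$ with $\{Q\}\#\F$; by definition of $\P^{\cup\downarrow}$ there are finitely many $P_{1},\dots,P_{n}\in\P$ with $Q\subset P_{1}\cup\dots\cup P_{n}$, so the union $\bigcup_{i}P_{i}$ meshes with $\F$ as well. The main step is then to promote ``$\bigcup_{i}P_{i}$ meshes with $\F$'' to ``some individual $P_{i}$ meshes with $\F$.'' This is where the filter hypothesis is used: if, for each $i$, some $F_{i}\in\F$ were disjoint from $P_{i}$, then the finite intersection $\bigcap_{i=1}^{n}F_{i}$ would still belong to $\F$ and would be disjoint from $\bigcup_{i=1}^{n}P_{i}$, contradicting $Q\#\F$. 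Hence some $P_{i}\in\P$ satisfies $\{P_{i}\}\#\F$, establishing $\F^{\#}\cap\P\neq\emptyset$.

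There is no genuine obstacle here; the only content is the distributivity of the meshing relation over finite unions against a filter, which is a standard short argument. The lemma therefore reduces to the obvious inclusion $\P\subset\P^{\cup\downarrow}$ together with the closure of $\F$ under finite intersections.
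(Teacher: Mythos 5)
Your proof is correct and follows the same route as the paper: the forward direction from $\P\subset\P^{\cup\downarrow}$, and the backward direction by reducing a mesh with a finite union to a mesh with one of its terms. The only difference is that the paper dispatches that last step by citing that $\F^{\#}$ is a filter-grill, whereas you prove this standard fact inline via the closure of $\F$ under finite intersections.
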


\begin{proof}
If $\F$ is $\mathbb{P}$-preCauchy, then $\F^{\#}\cap\P\neq\emptyset$
for every $\P\in\mathbb{P}$, and a fortiori, $\F^{\#}\cap\P^{\cup\downarrow}\neq\emptyset$
because $\P\subset\P^{\cup\downarrow}$. Thus $\F$ is $\mathbb{P}^{\cup\downarrow}$-preCauchy.

Conversely, if $\F$ is $\mathbb{\mathbb{P}^{\cup\downarrow}}$-preCauchy
and $\P\in\mathbb{P}$, then $\P^{\cup\downarrow}\cap\F^{\#}\neq\emptyset$
so that there is $P_{1},\ldots,P_{n}\in\P$ and $C\subset\cup_{i=1}^{n}P_{i}$
with $C\in\F^{\#}$. Then $\cup_{i=1}^{n}P_{i}\in\F^{\#}$ and thus
there is $i\in\{1\ldots n\}$ with $P_{i}\in\F^{\#}$ because $\F^{\#}$
is a filter-grill. Thus $\F$ is $\mathbb{P}$-preCauchy. 
\end{proof}
Moreover,
\begin{prop}
\cite[Theorem 3.1]{D.covers} A family $\P$ is a $\xi$-cover if
and only if 
\[
\adh_{\xi}\P_{c}=\bigcup_{\F\#\P_{c}}\lm_{\xi}\F
\]
is empty.
\end{prop}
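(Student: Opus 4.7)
The proof is essentially a direct unpacking of definitions, and the whole argument hinges on a single observation: for any filter $\F$ on $|\xi|$ and any family $\P\subset 2^{|\xi|}$, one has
\[
\F\#\P_{c}\iff\F\cap\P=\emptyset.
\]
Indeed, $\F\#\P_{c}$ means that for every $P\in\P$ and every $F\in\F$, $F\cap(|\xi|\setminus P)\neq\emptyset$, i.e., $F\not\subset P$. Since $\F$ is upward closed, $F\not\subset P$ for every $F\in\F$ is the same as $P\notin\F$. So the plan is to establish this equivalence first, and then read off the proposition by contraposition in both directions.

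For the ``only if'' direction, I would argue by contraposition: suppose $\adh_{\xi}\P_{c}\neq\emptyset$, so there is a filter $\F$ with $\F\#\P_{c}$ and $\lim_{\xi}\F\neq\emptyset$. By the observation above, $\F\cap\P=\emptyset$, so $\F$ is a convergent filter that witnesses that $\P$ fails to be a $\xi$-cover.

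For the ``if'' direction, again by contraposition: suppose $\P$ is not a $\xi$-cover, so there is a convergent filter $\F$ with $\F\cap\P=\emptyset$. Then $\F\#\P_{c}$, whence $\lim_{\xi}\F\subset\adh_{\xi}\P_{c}$ and the latter is nonempty.

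There is really no obstacle here beyond making sure the mesh-vs-non-intersection dictionary is applied correctly; the only mild subtlety is that the union defining $\adh_{\xi}\P_{c}$ implicitly ranges over filters meshing with $\P_{c}$ (as in the adherence definition on page~preceding), which is exactly what is needed to apply the key equivalence in both directions.
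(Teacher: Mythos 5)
Your proof is correct: the equivalence $\F\#\P_{c}\iff\F\cap\P=\emptyset$ (valid because $\F$ is upward closed) immediately translates ``$\P$ is a $\xi$-cover'' into ``no convergent filter meshes $\P_{c}$,'' which is exactly the emptiness of $\adh_{\xi}\P_{c}$. The paper states this proposition without proof, citing Dolecki, and your argument is precisely the standard unpacking of definitions that the citation presupposes, so there is nothing further to compare.
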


Of course, if $\P$ is an ideal cover, then $\P_{c}$ is a filter,
and $\adh_{\xi}\P_{c}$ is the adherence in the usual sense for filters.
We will say that a filter is \emph{non-adherent }if its adherence
is empty. 

In view of Proposition \ref{prop:acompleteidealcover} and Lemma \ref{lem:preCauchy},
we can reformulate the definitions of complete and ultracomplete collections
in terms of collection of filters.

Accordingly, we call a family $\mathbb{D}$ of non-adherent filters
on $(X,\xi)$ \emph{cocomplete }if for every $\G\in\mathbb{F}X$
\begin{equation}
\adh_{\xi}\G=\emptyset\then\exists\D\in\mathbb{D},\,\G\#\D,\label{eq:complete}
\end{equation}
and\emph{ ultracocomplete }if 
\begin{equation}
\adh_{\xi}\G=\emptyset\then\exists\D\in\mathbb{D},\,\G\geq\D.\label{eq:scomplete}
\end{equation}

In view of the previous discussion,
\begin{prop}
\label{prop:cocomplete} The following are equivalent for a collection
$\mathbb{P}$ of $\xi$-covers:
\begin{enumerate}
\item $\mathbb{P}$ is (ultra)complete;
\item $\mathbb{P}^{\cup\downarrow}$ is (ultra)complete;
\item $\mathbb{P}_{*}=\left\{ \P_{c}:\P\in\mathbb{P^{\cup\downarrow}}\right\} $
is an (ultra)cocomplete collection of non-adherent filters.
\end{enumerate}
\end{prop}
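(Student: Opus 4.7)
The plan is to treat the ``complete'' and ``ultracomplete'' variants in parallel, handling (1)$\Leftrightarrow$(2) by direct appeal to results already in the excerpt and establishing (2)$\Leftrightarrow$(3) by a set-theoretic duality that translates ``Cauchy-type'' conditions on the ideal cover $\mathbb{P}^{\cup\downarrow}$ into mesh/refinement conditions on $\mathbb{P}_{*}$.

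For (1)$\Leftrightarrow$(2), the ``complete'' case is exactly Proposition~\ref{prop:acompleteidealcover}. For the ``ultracomplete'' case, Lemma~\ref{lem:preCauchy} identifies the collections of $\mathbb{P}$-preCauchy and $\mathbb{P}^{\cup\downarrow}$-preCauchy filters, so the condition ``every preCauchy filter is $\xi$-adherent'' transfers verbatim. From this step onward I may assume the cover family is already an ideal.

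For (2)$\Leftrightarrow$(3), I first check that $\mathbb{P}_{*}$ consists of non-adherent filters: each $\P\in\mathbb{P}^{\cup\downarrow}$ is an ideal of proper subsets of $X$, so $\P_{c}$ is a filter, and since $\P$ is still a $\xi$-cover, Theorem~3.1 of~\cite{D.covers} yields $\adh_{\xi}\P_{c}=\emptyset$. The heart of the argument is the pair of elementary equivalences
\[
\F\cap\P=\emptyset\iff\F\#\P_{c},\qquad\F^{\#}\cap\P=\emptyset\iff\F\geq\P_{c},
\]
valid for $\F\in\mathbb{F}X$ and $\P\in\mathbb{P}^{\cup\downarrow}$. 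The first uses the downward closure of the ideal $\P$ (if some $F\in\F$ is disjoint from some $X\setminus P\in\P_{c}$, then $F\subset P$ forces $F\in\P$); the second uses the upward closure of $\F$ (if each $P\in\P$ is disjoint from some $F_{P}\in\F$, then $X\setminus P\in\F$ for every $P\in\P$, i.e.\ $\F\supseteq\P_{c}$). Negating them translates ``$\F$ is not $\mathbb{P}^{\cup\downarrow}$-Cauchy'' into ``$\exists\D\in\mathbb{P}_{*}$ with $\F\#\D$'' and ``$\F$ is not $\mathbb{P}^{\cup\downarrow}$-preCauchy'' into ``$\exists\D\in\mathbb{P}_{*}$ with $\F\geq\D$''. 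Taking contrapositives of the definitions of (ultra)complete then matches them term-by-term with conditions~(\ref{eq:complete}) and~(\ref{eq:scomplete}). The only genuine subtlety is that this cover/filter duality requires ideal structure, so the reduction carried out in step (1)$\Leftrightarrow$(2) is essential rather than cosmetic; once that is in place, nothing non-routine remains.
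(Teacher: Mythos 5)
Your proof is correct and follows essentially the same route the paper intends: the paper gives no explicit argument (it says only ``in view of the previous discussion''), and your write-up simply makes that discussion precise --- Proposition~\ref{prop:acompleteidealcover} and Lemma~\ref{lem:preCauchy} for $(1)\Leftrightarrow(2)$, and the two duality equivalences $\F\cap\P=\emptyset\iff\F\#\P_{c}$ and $\F^{\#}\cap\P=\emptyset\iff\F\geq\P_{c}$ for ideal covers, matched against (\ref{eq:complete}) and (\ref{eq:scomplete}), for $(2)\Leftrightarrow(3)$. Your observation that the ideal (downward-closed) structure is what makes the first equivalence work is exactly the point of passing to $\mathbb{P}^{\cup\downarrow}$.
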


The first observation is that any convergence space $(X,\xi)$ admits
an ultracocomplete family of filters, namely,
\[
\mathbb{D}_{\emptyset}:=\{\F\in\mathbb{F}X:\adh_{\xi}\F=\emptyset\},
\]
which is non-empty unless $\xi$ is compact, in which case any family
of filters is ultracocomplete, including the empty family.

Thus we can define the \emph{completeness number $\compl(\xi)$ }and
\emph{ultracompleteness number $\scompl(\xi)$ }of $\xi$, which are
the smallest cardinality of a cocomplete collection of filters, and
of an ultracocomplete collection of filters respectively.
\begin{rem}
One may consider variants of the notions at hand where adherence is
replaced by limit. For instance, Fletcher and Lindgren \cite{FletcherLindgren}
call \emph{strongly \v Cech-complete }a $T_{1}$ topological space
admitting a countable collection $\mathbb{P}$ of open covers so that
every $\mathbb{P}$-Cauchy filter converges. This property, however,
is not equivalent to its counterpart in terms of filters, for using
covers or ideal covers does make a difference here. In fact, no $T_{1}$
convergence with more than one point would admit any collection of
filters that satisfy the corresponding strong cocompleteness condition,
namely that 
\[
\lm_{\xi}\G=\emptyset\then\exists\D\in\mathbb{D},\,\G\#\D.
\]
\end{rem}

\subsection{Interpretation in the space of ultrafilters}

Given a convergence space $(X,\xi)$, let $\mathbb{U}X$ denote the
set of ultrafilters on $X$, endowed with the usual Stone topology
$\beta$ for which the sets
\[
\beta A=\{\U\in\mathbb{U}X:A\in\U\}
\]
where $A$ ranges over the subsets of $X$ form a basis for the topology.
Recall that closed subsets of $\mathbb{U}X$ are of the form
\[
\beta\F=\{\U\in\mathbb{U}X:\U\geq\F\},
\]
where $\F\in\mathbb{F}X$.

Let $\mathbb{U}_{\xi}X$ denote the set of ultrafilters on $X$ that
converge for $\xi$. The following was already observed implicitly
in \cite{Frolik_G}, and explicitly in \cite{D.covers} and \cite{DM.book},
but we include a proof for the sake of comparison with Theorem \ref{thm:scomplUX}.
We call a subset of a topological space a $G_{\kappa}$-\emph{subset}
(resp. $F_{\kappa}$-\emph{subset}) if it is an intersection of $\kappa$
many open sets (resp. a union of $\kappa$ many closed sets).
\begin{thm}
\label{thm:complUX} \cite{D.covers} The following are equivalent:
\begin{enumerate}
\item $\compl(\xi)\leq\kappa$ ;
\item $\mathbb{U}_{\xi}X$ is a $G_{\kappa}$-subset of $(\mathbb{U}X,\beta)$;
\item $\mathbb{U}X\setminus\mathbb{U}_{\xi}X$ is an $F_{\kappa}$-subset
of $(\mathbb{U}X,\beta)$. 
\end{enumerate}
\end{thm}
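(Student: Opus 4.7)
The plan is to establish (2)$\iff$(3) by simple complementation, and then prove (1)$\iff$(3) by exploiting the one-to-one correspondence between closed subsets of $(\mathbb{U}X,\beta)$ and filters on $X$, namely $F\mapsto\F$ with $F=\beta\F$.

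The key translation is the following two observations. First, a filter $\G\in\mathbb{F}X$ is non-adherent if and only if $\beta\G\subset\mathbb{U}X\setminus\mathbb{U}_\xi X$; this is immediate from $\adh_\xi\G=\bigcup_{\U\in\beta\G}\lm_\xi\U$. Second, for a filter $\D\in\mathbb{F}X$ and an ultrafilter $\U\in\mathbb{U}X$, one has $\U\#\D$ iff $\U\geq\D$ iff $\U\in\beta\D$, and more generally, $\G\#\D$ iff there exists an ultrafilter $\U\in\beta\G\cap\beta\D$.

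For (1)$\then$(3), start from a cocomplete collection $\mathbb{D}$ with $\card\mathbb{D}\leq\kappa$. I would show
\[
\mathbb{U}X\setminus\mathbb{U}_\xi X=\bigcup_{\D\in\mathbb{D}}\beta\D.
\]
The inclusion $\supset$ uses that each $\D\in\mathbb{D}$ is non-adherent (so no ultrafilter $\U\geq\D$ can converge). For $\subset$, pick $\U\in\mathbb{U}X\setminus\mathbb{U}_\xi X$; then $\adh_\xi\U=\lm_\xi\U=\emptyset$, so cocompleteness applied to $\G=\U$ yields $\D\in\mathbb{D}$ with $\U\#\D$, that is $\U\in\beta\D$. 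Since each $\beta\D$ is closed in $(\mathbb{U}X,\beta)$, the right-hand side exhibits $\mathbb{U}X\setminus\mathbb{U}_\xi X$ as an $F_\kappa$-set.

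For (3)$\then$(1), write $\mathbb{U}X\setminus\mathbb{U}_\xi X=\bigcup_{\alpha<\kappa}F_\alpha$ with each $F_\alpha$ closed, and invoke the characterization $F_\alpha=\beta\D_\alpha$ recalled in the excerpt to obtain a family $\mathbb{D}:=\{\D_\alpha:\alpha<\kappa\}$. Each $\D_\alpha$ is non-adherent because $\beta\D_\alpha\cap\mathbb{U}_\xi X=\emptyset$. To verify cocompleteness, take $\G\in\mathbb{F}X$ with $\adh_\xi\G=\emptyset$; then $\beta\G\subset\mathbb{U}X\setminus\mathbb{U}_\xi X$, so picking any $\U\in\beta\G$ (which exists since $\G$ is a proper filter) we get some $\alpha$ with $\U\in\beta\D_\alpha$, whence $\U\geq\G\vee\D_\alpha$ and in particular $\G\#\D_\alpha$. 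Thus $\card\mathbb{D}\leq\kappa$ certifies $\compl(\xi)\leq\kappa$.

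There is no serious obstacle here; the content is essentially a dictionary between non-adherent filters on $X$ and closed subsets of the Stone space of ultrafilters. The only point requiring slight care is the direction (3)$\then$(1), where one must verify the ``$\G\#\D$'' condition rather than the stronger ``$\G\geq\D$'' that appears in the ultracomplete case—this is precisely where the weaker conclusion of cocompleteness matches the fact that the $F_\kappa$-decomposition need only \emph{cover} $\beta\G$ rather than contain $\G$ itself in its defining filter.
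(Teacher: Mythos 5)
Your proposal is correct and follows essentially the same route as the paper: both directions rest on the identity $\mathbb{U}X\setminus\mathbb{U}_{\xi}X=\bigcup_{\D\in\mathbb{D}}\beta\D$, using that non-adherent filters correspond exactly to closed subsets of $\mathbb{U}X$ missing $\mathbb{U}_{\xi}X$, and that meshing of $\G$ with $\D$ translates to $\beta\G\cap\beta\D\neq\emptyset$. The only cosmetic difference is that in $(3)\Rightarrow(1)$ you pick a single ultrafilter in $\beta\G$ where the paper argues directly from the inclusion $\beta\G\subset\bigcup_{\D\in\mathbb{D}}\beta\D$; these are the same argument.
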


\begin{proof}
That $(2)\iff(3)$ is obvious. $(1)\then(3)$: Let $\mathbb{D}$ be
a complete family of filters of cardinality at most $\kappa$. Then
each $\D\in\mathbb{D}$ has empty adherence so that $\beta\D\subset\mathbb{U}X\setminus\mathbb{U}_{\xi}X$.
Moreover, if $\U\in\mathbb{U}X\setminus\mathbb{U}_{\xi}X$, then $\lm_{\xi}\U=\adh_{\xi}\U=\emptyset$
so that there is $\D\in\mathbb{D}$ with $\D\#\U$, equivalently,
$\U\in\beta\D$. Thus 
\[
\mathbb{U}X\setminus\mathbb{U}_{\xi}X=\bigcup_{\D\in\mathbb{D}}\beta\D
\]
 is an $F_{\kappa}$-subset of $(\mathbb{U}X,\beta)$. 

$(3)\then(1)$: If $\mathbb{U}X\setminus\mathbb{U}_{\xi}X$ is an
$F_{\kappa}$-subset of $(\mathbb{U}X,\beta)$, then 
\[
\mathbb{U}X\setminus\mathbb{U}_{\xi}X=\bigcup_{\D\in\mathbb{D}}\beta\D
\]
 for a family $\mathbb{D}$ of filters of cardinality $\kappa$. Because
$\beta\D\subset\mathbb{U}X\setminus\mathbb{U}_{\xi}X$, each $\D\in\mathbb{D}$
has empty adherence. If now $\G\in\mathbb{F}X$ with $\adh_{\xi}\G=\emptyset$
then $\beta\G\subset\mathbb{U}X\setminus\mathbb{U}_{\xi}X=\bigcup_{\D\in\mathbb{D}}\beta\D$
so that there is $\D\in\mathbb{D}$ with $\beta\G\cap\beta\D\neq\emptyset$
so that $\G\#\D$. Thus $\mathbb{D}$ is complete.
\end{proof}
Since closed and compact subsets of $(\mathbb{U}X,\beta)$ coincide,
$F_{\sigma}$-subsets and $\sigma$-compact subsets coincide. Hence,
\begin{cor}
\label{cor:countcomplete} A convergence $\xi$ is countably complete
if and only if $\mathbb{U}X\setminus\mathbb{U}_{\xi}X$ is $\sigma$-compact.
\end{cor}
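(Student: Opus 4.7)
The plan is to derive this directly from Theorem \ref{thm:complUX} specialized to $\kappa = \aleph_0$. By definition, $\xi$ being countably complete means $\compl(\xi) \leq \aleph_0$, so the theorem immediately yields the equivalence with $\mathbb{U}X \setminus \mathbb{U}_\xi X$ being an $F_{\aleph_0}$-subset, that is, an $F_\sigma$-subset of $(\mathbb{U}X, \beta)$. All that remains is to identify $F_\sigma$-subsets with $\sigma$-compact subsets in this particular ambient space.

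For this identification, I would invoke the fact that the Stone space $(\mathbb{U}X, \beta)$ is compact Hausdorff. Compactness gives that every closed subset is compact, while Hausdorffness gives that every compact subset is closed; hence the classes of closed and compact subsets of $(\mathbb{U}X, \beta)$ coincide (as noted in the sentence immediately preceding the corollary in the excerpt). Taking countable unions on both sides, the classes of $F_\sigma$-subsets and $\sigma$-compact subsets coincide as well, which closes the loop with Theorem \ref{thm:complUX}.

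There is no real obstacle here; the only thing to be mindful of is that compactness of $(\mathbb{U}X, \beta)$ is being used for the direction $F_\sigma \Rightarrow \sigma$-compact (closed in a compact space is compact), while Hausdorffness is used for $\sigma$-compact $\Rightarrow F_\sigma$ (compact in a Hausdorff space is closed). Both properties of the Stone space are standard, so the corollary reduces to a one-line invocation of the theorem followed by this topological observation, and the write-up can essentially consist of stating the specialization to $\kappa = \aleph_0$ together with the remark on compact-closed coincidence.
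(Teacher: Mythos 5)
Your proposal is correct and follows exactly the paper's route: the paper also obtains the corollary by specializing Theorem \ref{thm:complUX} to $\kappa=\aleph_{0}$ and invoking, in the sentence immediately preceding the statement, the coincidence of closed and compact subsets of the compact Hausdorff space $(\mathbb{U}X,\beta)$ to identify $F_{\sigma}$-subsets with $\sigma$-compact subsets. Your additional remark about which direction uses compactness and which uses Hausdorffness is a fine clarification but does not change the argument.
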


Recall that a $k$-\emph{cover} of a topological space $X$ is a family
$\C\subset\mathbb{P}(X)$ such that for every compact subset $K$
of $X$, there is $C\in\C$ with $K\subset C$. Recall (e.g., \cite{McCoy})
that the $k$-\emph{Arens number $k\frak{a}(X)$ }of $X$ is the smallest
cardinality of a family of compact sets that is a $k$-cover of $X$.
A topological space is \emph{hemicompact }if $k\frak{a}(X)\leq\omega$.
Clearly, these definition make sense for arbitrary convergence spaces
as well. As compactness is absolute, compact subsets of $S\subset\mathbb{U}X$
are of the form $\beta\H$ for some $\H\in\mathbb{F}X$. 

If $A$ is a subset of a topological space $X$, we denote by $\chi(A)$
the \emph{character of $A$ in $X$}, that is, the smallest cardinality
of a filter base of the filter 
\[
\O_{X}(A):=\{U\in\O_{X}:A\subset U\}^{\uparrow}.
\]

\begin{thm}
\label{thm:scomplUX}
\[
\scompl(\xi)=\chi\left(\mathbb{U}_{\xi}X\right)=k\mathfrak{a}(\mathbb{U}X\setminus\mathbb{U}_{\xi}X).
\]
\end{thm}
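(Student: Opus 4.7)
The plan is to exhibit a single order-reversing correspondence between non-adherent filters on $X$ and compact subsets of $\mathbb{U}X\setminus\mathbb{U}_{\xi}X$ that simultaneously identifies ultracocomplete families with $k$-covers (by compact sets) and with filter-bases (of complements) of $\O_{\mathbb{U}X}(\mathbb{U}_{\xi}X)$. The only work is in setting up the correspondence cleanly; no hard step intervenes.

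First I would establish the dictionary. Since $\mathbb{U}X$ is compact Hausdorff, its compact and closed subsets coincide, and every closed subset equals $\beta\H$ for some $\H\in\mathbb{F}X$; moreover $\beta\H\subset\beta\H'$ iff $\H\geq\H'$ (one direction is immediate, and the other follows from $\H=\bigcap_{\U\in\beta\H}\U$). Because $\adh_{\xi}\H=\bigcup_{\U\in\beta\H}\lm_{\xi}\U$, the filter $\H$ is non-adherent iff $\beta\H\cap\mathbb{U}_{\xi}X=\emptyset$. As compactness is absolute, the compact subsets of the subspace $\mathbb{U}X\setminus\mathbb{U}_{\xi}X$ are precisely the sets $\beta\H$ with $\adh_{\xi}\H=\emptyset$.

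Next I would read off both equalities. For $k\mathfrak{a}$: if $\mathbb{D}$ is ultracocomplete, then for any compact $K=\beta\H\subset\mathbb{U}X\setminus\mathbb{U}_{\xi}X$ there is $\D\in\mathbb{D}$ with $\H\geq\D$, hence $K\subset\beta\D$, so $\{\beta\D:\D\in\mathbb{D}\}$ is a compact $k$-cover of $\mathbb{U}X\setminus\mathbb{U}_{\xi}X$. Conversely, a compact $k$-cover $\{\beta\D_\alpha\}$ produces an ultracocomplete family $\{\D_\alpha\}$: a non-adherent $\G$ has $\beta\G$ compact in $\mathbb{U}X\setminus\mathbb{U}_{\xi}X$, hence included in some $\beta\D_\alpha$, whence $\G\geq\D_\alpha$. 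This gives $\scompl(\xi)=k\mathfrak{a}(\mathbb{U}X\setminus\mathbb{U}_{\xi}X)$.

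For $\chi$: the open neighborhoods of $\mathbb{U}_{\xi}X$ in $\mathbb{U}X$ are exactly the complements of closed sets $\beta\H$ contained in $\mathbb{U}X\setminus\mathbb{U}_{\xi}X$, i.e., of sets $\beta\H$ with $\adh_{\xi}\H=\emptyset$. A family $\{\mathbb{U}X\setminus\beta\D_\alpha\}$ is a base of $\O_{\mathbb{U}X}(\mathbb{U}_{\xi}X)$ iff every such $\beta\H$ is contained in some $\beta\D_\alpha$, iff every non-adherent $\H$ refines some $\D_\alpha$, which is exactly ultracocompleteness of $\{\D_\alpha\}$. Hence $\chi(\mathbb{U}_{\xi}X)=\scompl(\xi)$, completing the proof. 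The only point requiring care is the bijection $\H\leftrightarrow\beta\H$ between filters and closed subsets of $\mathbb{U}X$ together with its order-reversing nature, which is the genuine engine of the statement.
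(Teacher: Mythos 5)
Your proposal is correct and follows essentially the same route as the paper: the identification of compact subsets of $\mathbb{U}X\setminus\mathbb{U}_{\xi}X$ with sets $\beta\H$ for non-adherent $\H$, and the order-reversing translation between $\G\geq\D$ and $\beta\G\subset\beta\D$, is exactly the engine of the paper's argument for $\scompl(\xi)=k\mathfrak{a}(\mathbb{U}X\setminus\mathbb{U}_{\xi}X)$. The only difference is cosmetic: the paper dismisses $\chi\left(\mathbb{U}_{\xi}X\right)=k\mathfrak{a}(\mathbb{U}X\setminus\mathbb{U}_{\xi}X)$ as ``easily seen,'' whereas you supply the (correct) complementation argument identifying open neighborhoods of $\mathbb{U}_{\xi}X$ with complements of sets $\beta\H$ for non-adherent $\H$.
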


\begin{proof}
That $\chi\left(\mathbb{U}_{\xi}X\right)=k\mathfrak{a}(\mathbb{U}X\setminus\mathbb{U}_{\xi}X)$
is easily seen.

Let $\mathbb{D}$ be an ultracocomplete collection of non-adherent
filters. Because each $\D\in\mathbb{D}$ is non-adherent, $\bigcup_{\D\in\mathbb{D}}\beta\D\subset\mathbb{U}X\setminus\mathbb{U}_{\xi}X$.
On the other hand, if $K$ is a compact subset of $\mathbb{U}X\setminus\mathbb{U}_{\xi}X$,
there is $\G\in\mathbb{F}X$ with $\beta\G=K\subset\mathbb{U}X\setminus\mathbb{U}_{\xi}X$.
Hence, $\adh_{\xi}\G=\emptyset$, so that, by (\ref{eq:scomplete}),
there is $\D\in\mathbb{D}$ with $\G\geq\D$, equivalently, $\beta\G\subset\beta\D$.
Thus $\{\beta\D:\D\in\mathbb{D}\}$ is a $k$-cover of $\mathbb{U}X\setminus\mathbb{U}_{\xi}X$
composed of compact subsets of $\mathbb{U}X\setminus\mathbb{U}_{\xi}X$.
Thus $k\frak{a}(\mathbb{U}X\setminus\mathbb{U}_{\xi}X)\leq\scompl(\xi)$.

Conversely, a $k$-cover of $\mathbb{U}X\setminus\mathbb{U}_{\xi}X$
composed of compact subsets of $\mathbb{U}X\setminus\mathbb{U}_{\xi}X$
is of the form $\left\{ \beta\D:\D\in\mathbb{D}\right\} $ for some
family $\mathbb{D}$ of filters. Because $\beta\D\subset\mathbb{U}X\setminus\mathbb{U}_{\xi}X$,
each $\D\in\mathbb{D}$ is non-adherent. If now $\G$ is another non-adherent
filter on $(X,\xi)$ then $\beta\G$ is a compact subsets of $\mathbb{U}X\setminus\mathbb{U}_{\xi}X$,
so that there is $\D\in\mathbb{D}$ with $\beta\G\subset\beta\D$,
equivalently, $\D\leq\G$. Thus $\mathbb{D}$ is ultracocomplete.
Hence $\scompl(\xi)\leq k\frak{a}(\mathbb{U}X\setminus\mathbb{U}_{\xi}X)$.
\end{proof}
\begin{cor}
\label{cor:countultracomplete} A convergence $\xi$ is countably
ultracomplete if and only if $\mathbb{U}X\setminus\mathbb{U}_{\xi}X$
is hemicompact.
\end{cor}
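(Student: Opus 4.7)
The plan is to specialize Theorem \ref{thm:scomplUX} to the countable case; no further work should be needed, in parallel with how Corollary \ref{cor:countcomplete} is derived from Theorem \ref{thm:complUX}.

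First I would unfold definitions. By the paper's convention, a convergence is \emph{countably ultracomplete} if it admits a countable ultracomplete collection of covers. By Proposition \ref{prop:cocomplete}, this is equivalent to admitting a countable ultracocomplete collection of non-adherent filters, and hence to $\scompl(\xi)\leq\aleph_{0}$. On the other side, $\mathbb{U}X\setminus\mathbb{U}_{\xi}X$ is hemicompact precisely when $k\mathfrak{a}(\mathbb{U}X\setminus\mathbb{U}_{\xi}X)\leq\aleph_{0}$.

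Then I would invoke Theorem \ref{thm:scomplUX}, which asserts
\[
\scompl(\xi)=k\mathfrak{a}(\mathbb{U}X\setminus\mathbb{U}_{\xi}X).
\]
Comparing both cardinals to $\aleph_{0}$ gives the desired equivalence immediately.

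There is essentially no obstacle here; the only point to verify is that the equivalence ``countably ultracomplete iff $\scompl(\xi)\leq\aleph_{0}$'' really is what the paper's definitions yield, which is ensured by Proposition \ref{prop:cocomplete}(1)$\Leftrightarrow$(3). The compactness/closedness identification $\beta\H\leftrightarrow\H\in\mathbb{F}X$ used in the proof of Theorem \ref{thm:scomplUX} already handles the passage between compact $k$-covers of $\mathbb{U}X\setminus\mathbb{U}_{\xi}X$ and ultracocomplete families of filters, so nothing further needs to be re-proved at the level of this corollary.
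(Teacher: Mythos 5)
Your proposal is correct and is exactly how the paper derives this corollary: it is an immediate specialization of Theorem \ref{thm:scomplUX} to the countable case, using that countable ultracompleteness amounts to $\scompl(\xi)\leq\aleph_{0}$ (via Proposition \ref{prop:cocomplete}) and that hemicompactness of $\mathbb{U}X\setminus\mathbb{U}_{\xi}X$ amounts to $k\mathfrak{a}(\mathbb{U}X\setminus\mathbb{U}_{\xi}X)\leq\aleph_{0}$. No further argument is needed.
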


\begin{rem}
Corollaries \ref{cor:countcomplete} and \ref{cor:countultracomplete}
are reminiscent of the classical facts that a completely regular space
is \v{C}ech-complete if and only if its remainder in one (equivalently
all) of its compactification is $\sigma$-compact (e.g., \cite[Theorem 3.9.1]{Eng}),
and ultracomplete if and only if its remainder in one (equivalently
all) of its compactifications is hemicompact \cite{bijagoar2001ultracomplete}.
Of course, corollaries \ref{cor:countcomplete} and \ref{cor:countultracomplete}
make sense whether compactifications of $\xi$ in the usual sense
exist or not, so we can see them as generalization of the classical
statements. Let us clarify further the relationship:

As $(\mathbb{U}X,\beta)$ is the \v{C}ech-Stone compactification
of $X$ endowed with the discrete topology, it has the universal property
that any map $f:X\to K$ where $K$ is a compact Hausdorff topological
space has a unique continuous extension $\bar{f}:\mathbb{U}X\to K$
(defined by $\bar{f}(\U)=\lim_{K}f[\U]$), e.g., \cite[Theorem IX.5.11]{DM.book}.
Hence if $X$ is a completely regular space and $Y$ is a (Hausdorff)
compactification of $X$, then the inclusion map $i:X\to Y$ has a
unique continuous extension $\bar{i}:\mathbb{U}X\to Y$ defined by
$\bar{i}(\U)=\lim_{Y}i[\U]$, where $i[\U]=\U^{\uparrow_{Y}}$ is
the filter generated on $Y$ by $\U$. 

As $X$ is dense, $\bar{i}$ is onto, and thus, as a continuous onto
map between compact Hausdorff spaces, it is also perfect. It is clear
that $\bar{i}(\mathbb{U}X\setminus\mathbb{U}_{\xi}X)=Y\setminus X$.
Moreover, as $\bar{i}$ is continuous and perfect, $Y\setminus X$
is $\sigma$-compact (respectively hemicompact), if and only if $\mathbb{U}X\setminus\mathbb{U}_{\xi}X$
is. In other words, the classical statements for completely regular
spaces are simple corollaries of our corollaries \ref{cor:countcomplete}
and \ref{cor:countultracomplete}. More generally,
\end{rem}

\begin{thm}
Let $X$ be a completely regular topological space. 
\begin{enumerate}
\item The following are equivalent:
\begin{enumerate}
\item $\compl(X)\leq\kappa$;
\item $\mathbb{U}_{\xi}X$ is a $G_{\kappa}$-subset of $(\mathbb{U}X,\beta)$;
\item $X$ is a $G_{\kappa}$-subset of some compactification of $X$;
\item $X$ is a $G_{\kappa}$-subset of all compactifications of $X$.
\end{enumerate}
\item The following are equivalent:
\begin{enumerate}
\item $\scompl(X)\leq\kappa$;
\item $\chi\left(\mathbb{U}_{\xi}X\right)\leq\kappa$ in $(\mathbb{U}X,\beta)$;
\item $\chi(X)\leq\kappa$ in some compactification of $X$;
\item $\chi(X)\leq\kappa$ in all compactifications of $X$.
\end{enumerate}
\end{enumerate}
\end{thm}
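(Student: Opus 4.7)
The plan is to leverage the setup in the preceding remark. For any Hausdorff compactification $Y$ of $X$ (which exists since $X$ is completely regular), the continuous extension $\bar{i}\colon(\mathbb{U}X,\beta)\to Y$ of the inclusion is a continuous perfect surjection satisfying $\bar{i}^{-1}(X)=\mathbb{U}_{\xi}X$, so that $\bar{i}(\mathbb{U}X\setminus\mathbb{U}_{\xi}X)=Y\setminus X$. With this map in hand, the equivalences (a)$\iff$(b) in both parts are exactly Theorems~\ref{thm:complUX} and~\ref{thm:scomplUX}, and the implications (d)$\then$(c) are immediate (since at least one compactification exists). It thus suffices to prove (b)$\then$(d) and (c)$\then$(b) in each part.

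For Part (1), continuity of $\bar{i}$ yields (c)$\then$(b): if $X=\bigcap_{\alpha<\kappa}U_{\alpha}$ in some compactification $Y$, then $\mathbb{U}_{\xi}X=\bigcap_{\alpha<\kappa}\bar{i}^{-1}(U_{\alpha})$ is a $G_{\kappa}$-subset of $(\mathbb{U}X,\beta)$. For (b)$\then$(d), fix any compactification $Y$ and write $\mathbb{U}X\setminus\mathbb{U}_{\xi}X=\bigcup_{\alpha<\kappa}C_{\alpha}$ with each $C_{\alpha}$ closed. Since $\bar{i}$ is closed and surjective, $Y\setminus X=\bigcup_{\alpha<\kappa}\bar{i}(C_{\alpha})$ is an $F_{\kappa}$-subset of $Y$, so $X$ is a $G_{\kappa}$-subset of $Y$.

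For Part (2), the key lemma is that any continuous perfect surjection $f\colon Z\to Y$ satisfies $\chi(f^{-1}(A),Z)=\chi(A,Y)$ for every $A\subset Y$. Given a neighborhood base $\{U_{\alpha}\}_{\alpha<\kappa}$ of $A$ in $Y$, the preimages $\{f^{-1}(U_{\alpha})\}_{\alpha<\kappa}$ form a base at $f^{-1}(A)$: if $W\supset f^{-1}(A)$ is open, then $Y\setminus f(Z\setminus W)$ is an open neighborhood of $A$ (closedness of $f$), so some $U_{\alpha}\subset Y\setminus f(Z\setminus W)$, whence $f^{-1}(U_{\alpha})\subset W$. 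Conversely, from a base $\{V_{\beta}\}_{\beta<\kappa}$ at $f^{-1}(A)$, the sets $\{Y\setminus f(Z\setminus V_{\beta})\}_{\beta<\kappa}$ form a base at $A$: they are open neighborhoods of $A$ as before, and surjectivity of $f$ ensures that if $U\supset A$ is open, picking $V_{\beta}\subset f^{-1}(U)$ yields $Y\setminus f(Z\setminus V_{\beta})\subset U$. Applying this lemma to $f=\bar{i}$ and $A=X$, so that $f^{-1}(A)=\mathbb{U}_{\xi}X$, yields $\chi(X,Y)=\chi(\mathbb{U}_{\xi}X,\mathbb{U}X)$ for every compactification $Y$, giving (b)$\iff$(c) and (b)$\iff$(d) simultaneously.

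The only real obstacle is the character-preservation lemma for perfect continuous surjections; every other step reduces to a direct application of the fact, established in the preceding remark, that $\bar{i}$ is a continuous perfect surjection with $\bar{i}^{-1}(X)=\mathbb{U}_{\xi}X$.
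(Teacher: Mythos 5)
Your proposal is correct and follows exactly the route the paper intends: the theorem is stated without a separate proof as the ``more general'' form of the preceding remark, which sets up the continuous perfect surjection $\bar{i}\colon\mathbb{U}X\to Y$ with $\bar{i}^{-1}(X)=\mathbb{U}_{\xi}X$ and transfers the structure of the remainder along it. Your explicit verification that perfect continuous surjections preserve $F_{\kappa}$-decompositions of the remainder and the character of the preimage is precisely the detail the paper leaves implicit, combined as you say with Theorems \ref{thm:complUX} and \ref{thm:scomplUX} for the equivalences (a)$\iff$(b).
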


\subsection{Finite ultracompleteness numbers}

Since $\compl(\xi)=0$ or $\scompl(\xi)=0$ if the empty family of
covers is (ultra)complete, and since every filter is $\emptyset$-Cauchy,
\[
\compl(\xi)=0\iff\scompl(\xi)=0\iff\xi\text{ is compact. }
\]
On the other hand, 
\begin{prop}
\label{prop:finitenumber} The following are equivalent:
\begin{enumerate}
\item $\scompl(\xi)<\infty$
\item $\scompl(\xi)\leq1$;
\item $\compl(\xi)<\infty$
\item $\compl(\xi)\leq1$;
\item The cocompactoid filter is non-adherent (and possibly degenerate if
$\xi$ compact);
\item $\xi$ is locally compactoid. 
\end{enumerate}
\end{prop}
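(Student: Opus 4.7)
The plan is to chain the six conditions through the cocompactoid filter $\mathcal{K}_{c}$, organized around a key lemma stating that every non-adherent filter refines $\mathcal{K}_{c}$. Several implications are immediate: $(2)\Rightarrow(1)$, $(4)\Rightarrow(3)$, and $(2)\Rightarrow(4)$, the last because $\G\geq\D$ with $\D$ proper implies $\G\#\D$. To reduce finite cardinality to $\leq 1$, given finitely many non-adherent filters $\D_{1},\ldots,\D_{n}$, the meet $\D:=\bigcap_{i}\D_{i}$ is non-adherent because $\beta\D=\bigcup_{i}\beta\D_{i}$ yields $\adh\D=\bigcup_{i}\adh\D_{i}=\emptyset$; moreover $\G\geq\D_{i}$ gives $\G\geq\D$ and, since $\D\subseteq\D_{i}$, $\G\#\D_{i}$ gives $\G\#\D$. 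Thus $\{\D\}$ is (ultra)cocomplete, yielding $(1)\Rightarrow(2)$ and $(3)\Rightarrow(4)$.

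The key lemma is that every non-adherent filter $\G$ satisfies $\G\geq\mathcal{K}_{c}$. Indeed, any $\U\in\beta\G$ has $\lim\U\subseteq\adh\G=\emptyset$; a compactoid member of $\U$ would force $\lim\U\neq\emptyset$, so $\U$ contains no compactoid set, i.e., $\U\geq\mathcal{K}_{c}$. Since $\G=\bigcap\{\U:\U\in\beta\G\}$, we conclude $\G\geq\mathcal{K}_{c}$. This lemma gives $(5)\Rightarrow(2)$ at once, with $\mathcal{K}_{c}$ itself as witness. (If $\xi$ is compact, $\mathcal{K}_{c}$ is degenerate, no non-adherent filter exists, and $\scompl(\xi)=0$.)

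For $(5)\Leftrightarrow(6)$: if $\xi$ is locally compactoid and $x\in\lim\U$, then $\U$ contains a compactoid neighborhood of $x$, contradicting $\U\geq\mathcal{K}_{c}$, so $\adh\mathcal{K}_{c}=\emptyset$. Conversely, if some $x$ has no compactoid neighborhood, then for every $V\in\N_{\xi}(x)$ and compactoid $K$ we have $V\not\subseteq K$ (else $V$ would be compactoid as a subset of compactoid $K$), hence $V\cap(X\setminus K)\neq\emptyset$; this gives the finite intersection property of $\N_{\xi}(x)\cup\mathcal{K}_{c}$, and any ultrafilter refining both witnesses $x\in\lim\U\cap\adh\mathcal{K}_{c}$.

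The step I expect to be the main obstacle is $(4)\Rightarrow(5)$, which closes the cycle. Assume $\D$ witnesses $\compl(\xi)\leq 1$; the compact case being trivial, suppose $\D$ is proper, whence $\D\geq\mathcal{K}_{c}$ by the key lemma. I would show $\D=\mathcal{K}_{c}$ by contradiction: if some $A\in\D\setminus\mathcal{K}_{c}$ existed, then $X\setminus A$ would not be compactoid, yielding an ultrafilter $\U$ with $X\setminus A\in\U$ and $\lim\U=\emptyset$. Such $\U$ is non-adherent, yet $A\in\D$ and $X\setminus A\in\U$ being disjoint shows $\U\not\#\D$, contradicting $(4)$. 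Hence $\D=\mathcal{K}_{c}$, so $\mathcal{K}_{c}$ is non-adherent. The delicate point is precisely this last step: one must exploit the ideal structure of compactoid sets to manufacture a non-adherent ultrafilter that fails to mesh any strictly-finer-than-$\mathcal{K}_{c}$ witness, thereby forcing $\D$ to coincide with $\mathcal{K}_{c}$.
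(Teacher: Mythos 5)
Your proof is correct, and for the one implication the paper actually argues in detail --- $(5)\Rightarrow(2)$ via the observation that every non-adherent filter $\F$ satisfies $K^{c}\in\F$ for all compactoid $K$, hence $\F\geq\mathcal{K}_{c}$ --- you use exactly the paper's key lemma. The difference is in scope: the paper disposes of the equivalence of $(3)$, $(4)$, $(5)$, $(6)$ by citing Dolecki's Proposition 7.3 of \emph{Elimination of covers in completeness}, and of $(2)\Rightarrow(1)\Rightarrow(3)$ by the inequality $\compl(\xi)\leq\scompl(\xi)$, whereas you supply self-contained arguments for everything. Your extra ingredients are sound and worth having: the reduction of a finite (ultra)cocomplete family to a singleton via the infimum $\D=\bigcap_{i}\D_{i}$, using $\beta\D=\bigcup_{i}\beta\D_{i}$ to see that $\D$ is still non-adherent, gives $(1)\Rightarrow(2)$ and $(3)\Rightarrow(4)$ directly; and your $(4)\Rightarrow(5)$ is a clean closing of the cycle --- from $A\in\D$ with $X\setminus A$ non-compactoid you extract a non-convergent (hence non-adherent) ultrafilter containing $X\setminus A$ that cannot mesh $\D$, forcing $\D\subset\mathcal{K}_{c}$ and hence $\adh_{\xi}\mathcal{K}_{c}\subset\adh_{\xi}\D=\emptyset$. (Note you do not even need the full equality $\D=\mathcal{K}_{c}$: the inclusion $\D\subset\mathcal{K}_{c}$, i.e.\ $\mathcal{K}_{c}\geq\D$, already yields $\adh_{\xi}\mathcal{K}_{c}=\emptyset$.) What the paper's route buys is brevity; what yours buys is a proof that does not lean on the external reference.

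One imprecision to fix: in $(5)\Leftrightarrow(6)$ you phrase local compactoidness in terms of compactoid \emph{neighborhoods} of points. For a general convergence, ``locally compactoid'' means that every convergent filter contains a compactoid set, and an ultrafilter finer than $\N_{\xi}(x)$ need not converge to $x$ unless $\xi$ is a pretopology. The argument goes through verbatim once you replace $\N_{\xi}(x)$ by an arbitrary filter $\F$ with $x\in\lm_{\xi}\F$ containing no compactoid set: such an $\F$ meshes $\mathcal{K}_{c}$ (since $F\subset K$ would make $F$ compactoid, $\mathcal{K}$ being an ideal), and any ultrafilter finer than both converges to $x$ and witnesses $x\in\adh_{\xi}\mathcal{K}_{c}$.
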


Note that the case of the completeness number, that is, equivalences
(3) to (6), is \cite[Proposition 7.3]{D.covers}.
\begin{proof}
The equivalence between (3), (4), (5) and (6) is established in \cite[Proposition 7.3]{D.covers}.
Moreover, $(2)\then(1)\then(3)$ is clear, for $\compl(\xi)\leq\scompl(\xi)$.
Hence it is enough to show that $(5)\then(2)$. We claim that if the
cocompactoid filter $\mathcal{K}_{c}$ is non-adherent, then the family
$\{\mathcal{K}_{c}\}$ is ultracocomplete. Indeed, if $\F$ is a non-adherent
filter, then it does not mesh any compactoid set, that is, $K^{c}\in\F$
for every $K\in\mathcal{K}$ and thus $\F\geq\mathcal{K}_{c}$.
\end{proof}

\subsection{Basic facts on ultracompleteness numbers}

In view of Proposition \ref{prop:finitenumber}, a locally compact
space is countably ultracomplete, and a countably ultracomplete space
is countably complete. None of these implications can be reversed.
For instance, the set of irrationals with the topology induced from
$\mathbb{R}$ is a non-ultracomplete completely metrizable, hence
countably complete, space \cite[Example 3.2]{bijagoar2001ultracomplete}.
On the other hand, the space $[0,1]\setminus\{\frac{1}{n}:n\in\mathbb{N}\}$
with its natural topology is an ultracomplete metrizable space that
is not locally compact \cite[Example 3.1]{bijagoar2001ultracomplete}.
At any rate, there are countably complete (metrizable topological)
spaces that are not countably ultracocomplete, so that, in view of
corollaries \ref{cor:countcomplete} and \ref{cor:countultracomplete}:
\begin{prop}
\label{prop:subsetUX} There is a set $X$ and a subspace $S$ of
$(\mathbb{U}X\setminus X,\beta)$ which is $\sigma$-compact but not
hemicompact. 
\end{prop}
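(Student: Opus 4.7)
The plan is to apply Corollaries \ref{cor:countcomplete} and \ref{cor:countultracomplete} to a concrete topological space that is countably complete but not countably ultracomplete. The example cited right above the statement, namely the space of irrationals with the subspace topology from $\mathbb{R}$, is completely metrizable (hence Čech-complete, and so countably complete) yet fails to be ultracomplete. So the outline is: pick this $\xi$, set $X:=|\xi|$, let $S:=\mathbb{U}X\setminus\mathbb{U}_{\xi}X$, and read off the conclusion from the two corollaries.

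More precisely, I would first let $\xi$ be the topology on $X=\mathbb{R}\setminus\mathbb{Q}$ inherited from $\mathbb{R}$, and recall (citing \cite{bijagoar2001ultracomplete}) that this space is countably complete but not countably ultracomplete. Then by Corollary \ref{cor:countcomplete}, $S:=\mathbb{U}X\setminus\mathbb{U}_{\xi}X$ is $\sigma$-compact as a subspace of $(\mathbb{U}X,\beta)$, while by Corollary \ref{cor:countultracomplete}, the same $S$ fails to be hemicompact.

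The last thing to check is the inclusion $S\subset\mathbb{U}X\setminus X$ (under the usual identification of points of $X$ with principal ultrafilters). This is immediate from the axioms of a convergence: for every $x\in X$, the principal ultrafilter $\{x\}^{\uparrow}$ satisfies $x\in\lm_{\xi}\{x\}^{\uparrow}$, hence $\{x\}^{\uparrow}\in\mathbb{U}_{\xi}X$; taking complements in $\mathbb{U}X$ yields $S\subset\mathbb{U}X\setminus X$.

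There is no real obstacle here: the proposition is a direct translation of the fact that countable completeness and countable ultracompleteness diverge (even among metrizable topological spaces) into the language of the Stone space of ultrafilters, via the two corollaries that have just been established. The only non-formal ingredient is an explicit witnessing space, for which any countably complete but not countably ultracomplete space works -- the irrationals being the standard choice.
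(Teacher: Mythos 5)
Your proposal is correct and follows exactly the paper's own (implicit) argument: the proposition is stated as an immediate consequence of Corollaries \ref{cor:countcomplete} and \ref{cor:countultracomplete} applied to the irrationals, which the paper has just cited as countably complete but not countably ultracomplete. Your additional check that $S=\mathbb{U}X\setminus\mathbb{U}_{\xi}X$ avoids the principal ultrafilters (so that $S\subset\mathbb{U}X\setminus X$) is a detail the paper leaves implicit, and it is handled correctly.
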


This observation will turn out to be useful when considering the paving
number of a convergence, and its variants.

Countable (ultra)completeness is obviously hereditary with respect
to closed subsets, but, unlike countable completeness which is hereditary
with respect to $G_{\omega}$-sets among regular convergences, countable
ultracompleteness is not hereditary (among completely regular spaces)
even with respect to open sets \cite[Example 2.3]{buhagiar2002sums}. 

On the other hand, continuous perfect onto maps preserve countable
completeness (e.g., \cite[Theorem 3.9.10]{Eng}) and countable ultracompleteness
\cite[Theorem 1]{garcia1999perfect} in both directions. The latter
extends to the ultracompleteness number. This can be shown, for instance,
with the obvious variant of the proof of \cite[Theorem XI.6.4]{DM.book}(\footnote{See \cite{dolecki2016completeness} for further generalization of
this result.}) stating that if $f:|\xi|\to|\tau|$ is a continuous surjective perfect
map then $\compl(\tau)=\compl(\xi),$ to the effect that :
\begin{prop}
If $f:|\xi|\to|\tau|$ is a continuous surjective perfect map then
\[
\scompl(\tau)=\scompl(\xi).
\]
\end{prop}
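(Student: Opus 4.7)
The plan is to follow the structure of the proof of \cite[Theorem XI.6.4]{DM.book}, which establishes the analogous equality for the completeness number $\compl$, and to substitute the defining relation $\G\,\#\,\D$ of cocompleteness \eqref{eq:complete} by the stronger relation $\G\geq\D$ of ultracocompleteness \eqref{eq:scomplete} at the appropriate places. The indispensable tool is the standard characterization of a continuous surjective perfect map $f$: for every filter $\F\in\mathbb{F}|\xi|$,
\[
\adh_{\tau}f[\F]=f(\adh_{\xi}\F).
\]
Together with the identity $f[f^-[\G]]=\G$ (valid for surjective $f$ and $\G\in\mathbb{F}|\tau|$), this yields the two-way correspondence between non-adherent filters on either side: $\adh_{\xi}\F=\emptyset$ iff $\adh_{\tau}f[\F]=\emptyset$, and $\adh_{\tau}\G=\emptyset$ iff $\adh_{\xi}f^-[\G]=\emptyset$.

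For $\scompl(\tau)\leq\scompl(\xi)$, I would start from an ultracocomplete family $\mathbb{D}$ on $(X,\xi)$ and verify that $\{f[\D]:\D\in\mathbb{D}\}$ is ultracocomplete on $(Y,\tau)$, of no greater cardinality. Each $f[\D]$ is non-adherent by the correspondence. Given a non-adherent $\G$ on $Y$, its preimage $f^-[\G]$ is non-adherent on $X$, so some $\D\in\mathbb{D}$ satisfies $\D\leq f^-[\G]$; applying $f$ and using surjectivity yields $f[\D]\leq f[f^-[\G]]=\G$. For the reverse inequality $\scompl(\xi)\leq\scompl(\tau)$, I would argue dually with $\{f^-[\mathcal{E}]:\mathcal{E}\in\mathbb{E}\}$, invoking the general inequality $f^-[f[\F]]\leq\F$ to deduce $f^-[\mathcal{E}]\leq\F$ from $\mathcal{E}\leq f[\F]$.

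The only real obstacle is the identity $\adh_{\tau}f[\F]=f(\adh_{\xi}\F)$ itself. Continuity immediately yields $f(\adh_{\xi}\F)\subseteq\adh_{\tau}f[\F]$; the reverse inclusion requires the full strength of perfection, namely that any ultrafilter $\V\geq f[\F]$ converging to a point $y$ be the image under $f$ of an ultrafilter $\U\geq\F$ adhering at some point of the compact fibre $f^-(y)$. This standard convergence-theoretic fact rests on the closedness of $f$ together with compactness of its fibres, and is the only point where the argument is not entirely routine in view of Proposition \ref{prop:cocomplete}; I would cite it rather than re-establish it here.
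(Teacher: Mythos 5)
Your proof is correct and takes essentially the same approach as the paper, which gives no details at all and merely asserts that the statement follows from ``the obvious variant'' of the proof of \cite[Theorem XI.6.4]{DM.book} for $\compl$, with the mesh relation of (\ref{eq:complete}) replaced by the refinement relation of (\ref{eq:scomplete}). Your transfer of ultracocomplete families via $f[\cdot]$ and $f^{-}[\cdot]$, resting on the identity $\adh_{\tau}f[\F]=f(\adh_{\xi}\F)$ for continuous surjective perfect maps and on $f[f^{-}[\G]]=\G$, is precisely that variant worked out in full, and I see no gaps.
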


On the other hand, while countable completeness is not always preserved
under open continuous image, countable ultracompleteness is \cite[Proposition 2]{garcia1999perfect}.
This, too, extends to ultracompleteness numbers, and more importantly
can be refined to continuous \emph{biquotient }maps, that is, continuous
onto maps $f:|\xi|\to|\tau|$ with $\tau\geq\S(f\xi)$. Every onto
open map is almost open (that is, $\tau\geq f\xi$) and thus biquotient.
See \cite[Section XV.1]{DM.book} for details.
\begin{thm}
\label{prop:biquotientofultra} Let $f:|\xi|\to|\tau|$ be a continuous
biquotient map. Then
\[
\scompl(\tau)\leq\scompl(\xi).
\]
\end{thm}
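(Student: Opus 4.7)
My plan is to push an ultracocomplete family $\mathbb{D}$ of non-adherent filters on $(X,\xi)$ of cardinality $\scompl(\xi)$ forward to an analogous family $\mathbb{E}$ on $(Y,\tau)$. The naive candidate $\{f[\D]:\D\in\mathbb{D}\}$ fails: even under the biquotient hypothesis, the image filter $f[\D]$ of a non-adherent $\D$ may be $\tau$-adherent, because an ultrafilter $\U$ on $X$ with $f[\U]\geq f[\D]$ need not refine $\D$. Instead, for each $\D\in\mathbb{D}$ and each $D\in\D$, I consider the ``lower image''
\[
D^{\star}:=\{y\in Y:f^{-}(y)\subset D\},
\]
and let $\D^{\star}$ be the filter on $Y$ generated by $\{D^{\star}:D\in\D\}$. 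Since $(D_{1}\cap D_{2})^{\star}=D_{1}^{\star}\cap D_{2}^{\star}$, this is a filter base, possibly degenerate. I then let $\mathbb{E}$ consist of those $\D^{\star}$ that are non-degenerate; clearly $|\mathbb{E}|\leq|\mathbb{D}|=\scompl(\xi)$.

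The crucial step, which is where biquotient enters, is that each non-degenerate $\D^{\star}$ is $\tau$-non-adherent. Suppose an ultrafilter $\V\geq\D^{\star}$ converges to $y$ in $\tau$. Since $\tau\geq\S f\xi$, there exist $x\in f^{-}(y)$ and an ultrafilter $\U$ on $X$ with $f[\U]=\V$ and $x\in\lim_{\xi}\U$. For each $D\in\D$, $D^{\star}\in\V$, hence $f^{-}(D^{\star})\in\U$; moreover $f^{-}(D^{\star})\subset D$, since $x'\in f^{-}(D^{\star})$ means $f^{-}(f(x'))\subset D$ and in particular $x'\in D$. Thus $D\in\U$ for every $D\in\D$, so $\U\geq\D$ is a $\xi$-convergent refinement of $\D$, contradicting $\adh_{\xi}\D=\emptyset$.

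It remains to verify that $\mathbb{E}$ is ultracocomplete. Let $\G$ be a non-adherent filter on $Y$. Because $f$ is continuous and onto, $f^{-}[\G]:=\{f^{-}(G):G\in\G\}^{\uparrow}$ is a proper filter on $X$ with $f[f^{-}[\G]]=\G$, and it is $\xi$-non-adherent: any $\xi$-convergent refinement $\U$ would, by continuity, yield the $\tau$-convergent refinement $f[\U]\geq\G$ of $\G$. By ultracocompleteness of $\mathbb{D}$, some $\D\in\mathbb{D}$ satisfies $f^{-}[\G]\geq\D$, so for every $D\in\D$ there is $G_{D}\in\G$ with $f^{-}(G_{D})\subset D$. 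Then $G_{D}\subset D^{\star}$ (if $y\in G_{D}$, then $f^{-}(y)\subset f^{-}(G_{D})\subset D$, so $y\in D^{\star}$), hence $D^{\star}\in\G$ and $D^{\star}\supset G_{D}\neq\emptyset$. Thus $\D^{\star}$ is non-degenerate, belongs to $\mathbb{E}$, and satisfies $\G\geq\D^{\star}$.

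The main obstacle is to identify the correct way to push non-adherent filters from $X$ to $Y$. The forward image $f[\D]$ is too coarse to witness non-adherence, but the fiber-saturated sets $D^{\star}$ match precisely the lift supplied by the biquotient hypothesis. In the cover formulation of Proposition~\ref{prop:cocomplete}, $\D^{\star}$ corresponds to the $\cup\downarrow$-closure of $\{f(P):P\in\P\}$ for $\P=\D_{c}$.
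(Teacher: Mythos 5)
Your proof is correct. It follows the same overall strategy as the paper's -- push the optimal ultracocomplete family forward along $f$, use the biquotient hypothesis to check that the pushed-forward filters are still non-adherent, and pull a test filter on $|\tau|$ back through $f^{-}$ -- but it executes the pushforward in the filter formulation rather than the cover formulation, and this changes how biquotient is used. The paper starts from an ultracomplete family $\mathbb{P}$ of \emph{ideal} covers and proves, via Lemma \ref{lem:imagecover}(2), that each $f[\P]$ is a cover of \emph{all} $\tau$-convergent filters; that step requires the finite-subfamily argument on $\beta\F$ (compactness of the set of ultrafilters finer than $\F$), with the ideal structure of $\P$ absorbing the resulting finite union. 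Your $D^{\star}=Y\setminus f(X\setminus D)$ is precisely the complement of the image $f(X\setminus D)$ of the corresponding cover element, as you observe at the end; but because you only need each $\mathcal{D}^{\star}$ to be non-adherent, you test only against single convergent ultrafilters $\mathcal{V}\geq\mathcal{D}^{\star}$, so biquotient enters solely through the lift of one $\tau$-convergent ultrafilter to a $\xi$-convergent ultrafilter mapping onto it, and no finite-subfamily step is needed. This shortcut is legitimate because for ideal covers the notions of cover and pseudocover coincide, which is implicit in the paper's identification of ideal covers with non-adherent filters (Proposition \ref{prop:cocomplete}). Your second half -- $f^{-}[\mathcal{G}]$ is non-adherent, hence refines some $\mathcal{D}$, and $f^{-}(G_{D})\subset D$ transports back to $G_{D}\subset D^{\star}$ -- matches the paper's pullback of a preCauchy filter, and your handling of possibly degenerate $\mathcal{D}^{\star}$ is self-consistent, since the second half shows that any non-adherent $\mathcal{G}$ forces the relevant $\mathcal{D}^{\star}$ to be proper. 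In short: a correct, slightly more economical dual rendering of the paper's argument.
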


\begin{lem}
\label{lem:imagecover} If 
\begin{enumerate}
\item $f:|\xi|\to|\tau|$ is almost open and onto, and $\P$ is a $\xi$-cover
then $f[\P]=\{f(P):P\in\P\}$ is a $\tau$-cover;
\item $f:|\xi|\to|\tau|$ is a biquotient map and $\P$ is an ideal $\xi$-cover
then $f[\P]$ is an (ideal) $\tau$-cover.
\end{enumerate}
\end{lem}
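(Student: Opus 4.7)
The plan is to unpack what the almost-open and biquotient hypotheses say about the $\tau$-convergent filters on the target, and then exploit the cover property of $\P$ along filters pulled back through $f$. In both parts I fix a $\tau$-convergent filter $\G$, say $y\in\lim_{\tau}\G$, and look for $P\in\P$ with $f(P)\in\G$.

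For (1), almost-openness gives $\tau\ge f\xi$, so $y\in\lim_{f\xi}\G$. The standard description of the final convergence (using that $f$ is onto) produces some $x\in f^{-}(y)$ and a $\xi$-convergent filter $\F$ at $x$ with $\G\ge f[\F]$. Since $\P$ is a $\xi$-cover, $\F\cap\P\ne\emptyset$; pick $P\in\F\cap\P$. Then $P\subseteq f^{-}(f(P))$ forces $f^{-}(f(P))\in\F$, which by the paper's definition of $f[\F]$ means $f(P)\in f[\F]\subseteq\G$, as required.

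For (2), biquotient means $\tau\ge\S f\xi$, hence $y\in\bigcap_{\U\in\beta\G}\lim_{f\xi}\U$. Applying the argument of (1) to each such $\U$ produces $P_{\U}\in\P$ with $f(P_{\U})\in\U$. The decisive step is to upgrade this family of ultrafilter-level witnesses to a single witness inside $\G$. Suppose, for contradiction, that $f(P)\notin\G$ for every $P\in\P$; equivalently, $|\tau|\setminus f(P)\in\G^{\#}$ for every $P\in\P$. Because $\P$ is an ideal and $f$ commutes with finite unions, $\bigcap_{j=1}^{n}\bigl(|\tau|\setminus f(P_{j})\bigr)=|\tau|\setminus f\bigl(\bigcup_{j}P_{j}\bigr)$ is again a member of that family, so $\G\cup\{|\tau|\setminus f(P):P\in\P\}$ has the finite intersection property and extends to some ultrafilter $\U\in\beta\G$. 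Specializing the previous step to this $\U$ yields $P_{\U}\in\P$ with $f(P_{\U})\in\U$, contradicting $|\tau|\setminus f(P_{\U})\in\U$. Hence $f[\P]\cap\G\ne\emptyset$, so $f[\P]$ is a $\tau$-cover. That $f[\P]^{\cup\downarrow}$ is an ideal cover is automatic once $f[\P]$ is a cover.

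The main obstacle is precisely this ultrafilter-to-filter passage in (2): the ideal hypothesis on $\P$ is used exactly so that $\{|\tau|\setminus f(P):P\in\P\}$ is stable under finite intersections; without it the FIP extension step collapses and the contradiction is lost. This is also the structural reason why one needs an \emph{ideal} cover rather than an arbitrary cover when weakening almost-open to biquotient.
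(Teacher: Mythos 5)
Your proof is correct and follows the paper's argument: part (1) is the same computation with the final convergence, and part (2) likewise reduces to the ultrafilters in $\beta\G$, obtains witnesses $P_{\U}\in\P$ with $f(P_{\U})\in\U$, and uses the ideal property of $\P$ together with $f\left(\bigcup_{j}P_{j}\right)=\bigcup_{j}f(P_{j})$ to combine them. The only difference is in the last step, where the paper cites the fact that finitely many of the sets $f(P_{\U})$ already have their union in the filter, while you re-derive that same compactness property of $\beta\G$ inline by extending $\G\cup\left\{ |\tau|\setminus f(P):P\in\P\right\} $ to an ultrafilter and reaching a contradiction.
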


\begin{proof}
(1) Under these assumptions $\tau\geq f\xi$. Let $\F$ be a $\tau$-convergent
filter. Then there is a $\xi$-convergent filter $\G$ with $\F\geq f[\G]$.
As $\P$ is a $\xi$-cover, there is $P\in\P\cap\G$, hence $f(P)\in f[\P]\cap\F$.

(2) Let $\F$ be a $\tau$-convergent filter. Then every $\U\in\beta\F$
is $f\xi$-convergent, and by (1) contains $f(P_{\U})$ for some $P_{\U}\in\P$.
Thus, there is a finite subset $S$ of $\beta\F$ (e.g., by \cite[Proposition II.6.5]{DM.book})
such that 
\[
\bigcup_{\U\in S}f(P_{\U})=f\left(\bigcup_{\U\in S}P_{\U}\right)\in\F.
\]
Since $\P$ is an ideal cover, $\bigcup_{\U\in S}P_{\U}\in\P$ and
thus $f[\P]$ is a $\tau$-cover.
\end{proof}
\begin{proof}[Proof of Theorem \ref{prop:biquotientofultra}]
Suppose $\mathbb{P}$ is an ultracomplete collection of $\xi$-covers,
which we can assume to be ideal covers by Proposition \ref{prop:cocomplete}.
Then 
\[
f\left\llbracket \mathbb{P}\right\rrbracket =\left\{ f[\P]:\P\in\mathbb{P}\right\} 
\]
 is a collection of $\tau$-covers by Lemma \ref{lem:imagecover}.
Moreover, if $\F$ is $f\left\llbracket \P\right\rrbracket $-preCauchy,
then for every $\P\in\mathbb{P}$ there is $P_{\P}\in\P$ with $f(P_{\P})\in\F^{\#}$,
equivalently, $P_{\P}\in(f^{-}[\F])^{\#}$. Since $\mathbb{P}$ is
ultracomplete, there is $\adh_{\xi}f^{-}[\F]\neq\emptyset$ and thus,
by continuity of $f$, $\adh_{\tau}\F\neq\emptyset$.
\end{proof}
In contrast, countable ultracompleteness is not preserved under closed
maps \cite[Example 3.3]{bijagoar2001ultracomplete}, hence not under
hereditarily quotient maps.

\section{\label{sec:(Pseudo)paving-number}(Pseudo)paving number}

A family $\mathbb{D}$ of filters on a convergence space $(X,\xi)$
is a \emph{pavement at $x$ }if every $\D\in\mathbb{D}$ converges
to $x$ and, for every filter $\F$ converging to $x$, there is $\D\in\mathbb{D}$
with $\D\leq\F$. The family $\mathbb{D}$ is a \emph{pseudopavement
at $x$ }if every $\D\in\mathbb{D}$ converges to $x$ and, for every
ultrafilter $\U$ converging to $x$, there is $\D\in\mathbb{D}$
with $\U\in\beta\D$.

Let $\mathfrak{p}(\xi,x)$ denote the \emph{paving number} of $\xi$
at $x$, that is, the smallest cardinality of a pavement at $x$ for
$\xi$, and let $\frak{pp}(\xi,x)$ denote the \emph{pseudopaving
number of $\xi$ at $x$}, that is, the smallest cardinality of a
pseudopavement at $x$ for $\xi$.

Let 
\[
\mathbb{U}_{\xi}(x):=\left\{ \U\in\mathbb{U}X:x\in\lm_{\xi}\U\right\} .
\]

\begin{thm}
\label{thm:pseudopavement} Let $(X,\xi)$ be a pseudotopological
space, and let $\mathbb{D}$ be a family of filters on $X$. The following
are equivalent:
\begin{enumerate}
\item $\mathbb{D}$ is a pseudopavement at $x$;
\item $x\in\lim_{\xi}\D$ for every $\D\in\mathbb{D}$, and, for every $\F\in\mathbb{F}X$,
\[
x\in\lm_{\xi}\F\then\exists\D\in\mathbb{D},\;\D\#\F;
\]
\item 
\[
\mathbb{U}_{\xi}(x)=\bigcup_{\D\in\mathbb{D}}\beta\D.
\]
\end{enumerate}
\end{thm}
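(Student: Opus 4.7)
The plan is to prove $(1) \iff (3)$ and $(1) \iff (2)$, treating (1) as a convenient hinge and pinpointing where the pseudotopology hypothesis is actually invoked. All three conditions re-encode the same intuitive content---that $\mathbb{D}$ is a family of ``test filters'' coarse enough to capture every convergence at $x$---so much of the work is a dictionary translation between refinement, mesh, and membership in $\beta$-sets.

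For $(1) \iff (3)$, I would split (3) into two inclusions. The inclusion $\bigcup_{\D \in \mathbb{D}} \beta\D \subseteq \mathbb{U}_\xi(x)$ is equivalent to the statement ``every $\D \in \mathbb{D}$ converges to $x$'': one direction is just monotonicity of $\lm_\xi$ under filter refinement ($\D \leq \U$ and $x \in \lm_\xi \D$ forces $x \in \lm_\xi \U$), and the reverse direction is precisely where pseudotopology enters, via the identity $\lm_{\xi}\D = \bigcap_{\U \in \beta\D} \lm_{\xi}\U$. The other inclusion $\mathbb{U}_\xi(x) \subseteq \bigcup_{\D \in \mathbb{D}} \beta\D$ unfolds tautologically into the second clause of (1).

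For $(1) \iff (2)$, both conditions explicitly require every $\D \in \mathbb{D}$ to converge to $x$, so only the filter/ultrafilter clauses need comparison. For $(1) \Rightarrow (2)$: given $\F$ with $x \in \lm_\xi \F$, pick any $\U \in \beta\F$; then $\U \geq \F$ forces $x \in \lm_\xi \U$, so (1) supplies $\D \in \mathbb{D}$ with $\U \geq \D$, and since both $\D$ and $\F$ are coarser than the proper filter $\U$, we get $\D \# \F$. For $(2) \Rightarrow (1)$: given an ultrafilter $\U$ with $x \in \lm_\xi \U$, (2) supplies $\D \in \mathbb{D}$ with $\D \# \U$, and by maximality of $\U$ this meshing upgrades to $\D \subseteq \U$, i.e., $\U \in \beta\D$.

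There is no real obstacle here: the only subtle observation is that the pseudotopology hypothesis is used exactly once---to upgrade ``every ultrafilter refining $\D$ converges to $x$'' into ``$\D$ itself converges to $x$'' in the direction $(3) \Rightarrow (1)$. Every other step is a formal manipulation of the mesh and refinement relations, combined with the standard fact that an ultrafilter meshes a filter if and only if it refines it.
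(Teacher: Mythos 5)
Your proposal is correct and uses essentially the same ingredients as the paper's proof (monotonicity of $\lm_{\xi}$ under refinement, the equivalence $\D\#\U\iff\D\subseteq\U$ for ultrafilters, and the pseudotopology hypothesis invoked only to pass from $\beta\D\subseteq\mathbb{U}_{\xi}(x)$ to $x\in\lm_{\xi}\D$); the only difference is organizational, two biconditionals hinged on (1) instead of the paper's cycle $(1)\then(2)\then(3)\then(1)$.
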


\begin{proof}
$(1)\then(2)$: If $x\in\lim_{\xi}\F$ and $\U\in\beta\F$, then $x\in\lim_{\xi}\U$,
so that there is $\D\in\mathbb{D}$ with $\D\leq\U$, because $\mathbb{D}$
is a pseudopavement. Hence $\D\#\F$.

$(2)\then(3)$: Because $x\in\lim_{\xi}\D$ for every $\D\in\mathbb{D}$,
\[
\bigcup\beta\D\subset\mathbb{U}_{\xi}(x).
\]
If now $\U\in\mathbb{U}_{\xi}(x)$, then by (2), there is $\D\in\mathbb{D}$
with $\D\#\U$, equivalently, $\U\in\beta\D$. Hence, we have (3).

$(3)\then(1)$: If $\beta\D\subset\mathbb{U}_{\xi}(x)$ and $\xi$
is pseudotopological, $x\in\lim_{\xi}\D$. If $x\in\lim_{\xi}\U$
for $\U\in\mathbb{U}X$, then $\U\in\mathbb{U_{\xi}}(x),$so that
by (3), there is $\D\in\mathbb{D}$ with $\U\in\beta\D$.
\end{proof}
\begin{cor}
\label{cor:ppavingnUx}
\[
\frak{pp}(\xi,x)=\inf\left\{ \kappa\in\mathrm{Ord}:\mathbb{U}_{\xi}(x)\text{ is a }F_{\kappa}\text{-subset of }\mathbb{U}X\right\} .
\]
In particular, $\frak{pp}(\xi,x)\leq\omega$ if and only if $\mathbb{U}_{\xi}(x)$
is $\sigma$-compact.
\end{cor}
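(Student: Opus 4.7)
The plan is to deduce the equality directly from the equivalence $(1)\iff(3)$ in Theorem \ref{thm:pseudopavement}, combined with the description recalled at the beginning of the subsection that every closed subset of $(\mathbb{U}X,\beta)$ has the form $\beta\F$ for some $\F\in\mathbb{F}X$. Together these two facts set up a bijective correspondence between pseudopavements at $x$ of cardinality $\kappa$ and representations of $\mathbb{U}_\xi(x)$ as a union of $\kappa$ closed subsets of $\mathbb{U}X$, and the corollary then reduces to reading off cardinalities on both sides.

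Concretely, for the inequality $\inf\{\kappa:\mathbb{U}_\xi(x)\text{ is }F_\kappa\}\leq\frak{pp}(\xi,x)$, I take a pseudopavement $\mathbb{D}$ at $x$ of minimal cardinality; by Theorem \ref{thm:pseudopavement}(3),
\[
\mathbb{U}_\xi(x)=\bigcup_{\D\in\mathbb{D}}\beta\D,
\]
which exhibits $\mathbb{U}_\xi(x)$ as an $F_{|\mathbb{D}|}$-subset of $\mathbb{U}X$. For the reverse inequality, given a writing $\mathbb{U}_\xi(x)=\bigcup_{\alpha<\kappa}C_\alpha$ with each $C_\alpha$ closed in $(\mathbb{U}X,\beta)$, I apply the quoted description to obtain filters $\F_\alpha\in\mathbb{F}X$ with $C_\alpha=\beta\F_\alpha$, and then Theorem \ref{thm:pseudopavement}(3)$\then$(1) says that $\{\F_\alpha:\alpha<\kappa\}$ is a pseudopavement at $x$, so $\frak{pp}(\xi,x)\leq\kappa$.

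For the ``in particular'' clause, I invoke the observation already used to obtain Corollary \ref{cor:countcomplete}: closed and compact subsets of $(\mathbb{U}X,\beta)$ coincide, so $F_\sigma$-subsets and $\sigma$-compact subsets of $\mathbb{U}X$ coincide. Specializing the equality just established to $\kappa=\omega$ then yields $\frak{pp}(\xi,x)\leq\omega$ if and only if $\mathbb{U}_\xi(x)$ is $\sigma$-compact. I do not anticipate any substantial obstacle: once Theorem \ref{thm:pseudopavement} is in hand, the corollary is essentially a dictionary between the filter-theoretic language of pseudopavements and the topological language of closed covers in the Stone space of ultrafilters.
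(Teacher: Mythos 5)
Your proof is correct and takes essentially the route the paper intends: the corollary is exactly the translation of Theorem \ref{thm:pseudopavement} (1)$\iff$(3) through the identification of closed subsets of $(\mathbb{U}X,\beta)$ with sets of the form $\beta\F$, together with the coincidence of closed and compact subsets of $\mathbb{U}X$ for the $\sigma$-compactness clause. The only point worth flagging is that the direction using (3)$\then$(1) relies on the pseudotopological hypothesis of Theorem \ref{thm:pseudopavement}, which the corollary inherits implicitly and which your argument uses in the same way.
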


Let us return to characterizing the usual paving number. 

\begin{thm}
Let $(X,\xi)$ be a pseudotopological space. A family $\mathbb{D}$
of filters on $X$ is a pavement at $x$ for $\xi$ if and only if
\[
\left\{ \beta\D:\D\in\mathbb{D}\right\} 
\]
is a $k$-cover of $\mathbb{U}_{\xi}(x)$ by compact subsets of $\mathbb{U}_{\xi}(x)$.
\end{thm}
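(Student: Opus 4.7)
The plan is to prove both implications by translating the filter-order conditions in the definition of a pavement into inclusion conditions between the compact sets $\beta\D$ in $(\mathbb{U}X,\beta)$, exploiting the fact (noted in the paper, just before the introduction of the $k$-Arens number) that compact subsets of $\mathbb{U}X$ are precisely the sets of the form $\beta\F$ for $\F\in\mathbb{F}X$, and that $\beta$ reverses the order: $\D\leq\F \iff \beta\F\subset\beta\D$.

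First I would record two easy observations that will be used in both directions. Since $\beta\D$ is closed in the compact Hausdorff space $(\mathbb{U}X,\beta)$, it is automatically compact, so it will be a compact subset of $\mathbb{U}_{\xi}(x)$ as soon as we know $\beta\D\subset\mathbb{U}_{\xi}(x)$. Moreover, by the monotonicity axiom of convergences, if $\D$ converges to $x$ then every $\U\geq\D$ converges to $x$, that is, $\beta\D\subset\mathbb{U}_{\xi}(x)$. Conversely, when $\xi$ is pseudotopological, $\beta\D\subset\mathbb{U}_{\xi}(x)$ forces $x\in\lm_{\xi}\D$, because $\lm_{\xi}\D=\bigcap_{\U\in\beta\D}\lm_{\xi}\U$.

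For the forward direction, assume $\mathbb{D}$ is a pavement at $x$. By the first observation, $\{\beta\D:\D\in\mathbb{D}\}$ is a family of compact subsets of $\mathbb{U}_{\xi}(x)$. To show the $k$-cover property, let $K$ be a compact subset of $\mathbb{U}_{\xi}(x)$. Then $K=\beta\F$ for some $\F\in\mathbb{F}X$, and $\beta\F\subset\mathbb{U}_{\xi}(x)$ together with the pseudotopology of $\xi$ gives $x\in\lm_{\xi}\F$. The pavement hypothesis yields $\D\in\mathbb{D}$ with $\D\leq\F$, which, upon passing to $\beta$, is exactly $K=\beta\F\subset\beta\D$.

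Conversely, assume $\{\beta\D:\D\in\mathbb{D}\}$ is a $k$-cover of $\mathbb{U}_{\xi}(x)$ by compact subsets of $\mathbb{U}_{\xi}(x)$. For each $\D\in\mathbb{D}$, $\beta\D\subset\mathbb{U}_{\xi}(x)$ and the pseudotopology of $\xi$ give $x\in\lm_{\xi}\D$. If now $\F\in\mathbb{F}X$ with $x\in\lm_{\xi}\F$, then by monotonicity $\beta\F\subset\mathbb{U}_{\xi}(x)$, and $\beta\F$ is a compact subset of $\mathbb{U}_{\xi}(x)$, so the $k$-cover assumption supplies $\D\in\mathbb{D}$ with $\beta\F\subset\beta\D$, equivalently, $\D\leq\F$. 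The only nontrivial steps are the two uses of the pseudotopology axiom; everything else is a purely formal translation through the Stone correspondence $\F\mapsto\beta\F$, so no serious obstacle is expected.
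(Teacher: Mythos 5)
Your proof is correct and follows essentially the same route as the paper's: both directions rest on the identification of compact subsets of $(\mathbb{U}X,\beta)$ with sets of the form $\beta\F$, the order-reversal $\D\leq\F\iff\beta\F\subset\beta\D$, and the two uses of the pseudotopology axiom to pass between $\beta\D\subset\mathbb{U}_{\xi}(x)$ and $x\in\lm_{\xi}\D$. No gaps.
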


\begin{proof}
Let $\mathbb{D}$ be a family of compact subsets of $\mathbb{U}_{\xi}(x)$
that is a $k$-cover of $\mathbb{U}_{\xi}(x)$. Since closed subsets
of $\mathbb{U}X$ are of the form $\beta\F$ for some filter $\F\in\mathbb{F}X$,
there is $\mathbb{B}\subset\mathbb{F}X$ with $\mathbb{D}=\{\beta\B:\B\in\mathbb{B}\}$.
We claim that $\mathbb{B}$ is a $\xi$-pavement at $x$. Since $\beta\B\subset\mathbb{U}_{\xi}(x)$,
each $\B\in\mathbb{B}$ converges to $x$, because $\xi$ is a pseudotopology.
If $x\in\lim_{\xi}\G$, then $\beta\G$ is a compact subset of $\mathbb{U}_{\xi}(x)$,
so that there is $\B\in\mathbb{B}$ with $\beta\G\subset\beta\B$,
that is, $\B\subset\G$. 

Conversely, if $\mathbb{B}$ is a $\xi$-pavement at $x$, then $\mathbb{D}=\{\beta\B:\B\in\mathbb{B}\}$
is a family of compact subsets of $\mathbb{U}_{\xi}(x)$, and is a
$k$-cover of $\mathbb{U}_{\xi}(x)$. Indeed, compact sets are of
the form $\beta\G$ for some filter $\G$ on $X$, and $\beta\G\subset\mathbb{U}_{\xi}(x)$
means that $x\in\lim_{\xi}\G$ because $\xi$ is a pseudotopology.
Because $\mathbb{B}$ is a pavement, there is $\B\in\mathbb{B}$ with
$\B\leq\G$, equivalently, $\beta\G\subset\beta\B$. 
\end{proof}
\begin{cor}
\label{cor:pavingnUx}
\[
\mathfrak{p}(\xi,x)=k\frak{a}\left(\mathbb{U}_{\xi}(x)\right).
\]
In particular, $\mathfrak{p}(\xi,x)\leq\omega$ if and only if $\mathbb{U}_{\xi}(x)$
is hemicompact. 
\end{cor}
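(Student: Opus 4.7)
The plan is to read this corollary off the preceding theorem by translating its content into cardinal invariants. The theorem produced, for a pseudotopological $\xi$, a correspondence $\mathbb{B}\mapsto\{\beta\B:\B\in\mathbb{B}\}$ between $\xi$-pavements at $x$ and $k$-covers of $\mathbb{U}_{\xi}(x)$ by compact subsets of $\mathbb{U}_{\xi}(x)$. I would first observe that the map $\F\mapsto\beta\F$ is injective on $\mathbb{F}X$, since $\F=\bigcap_{\U\in\beta\F}\U$, so this correspondence is cardinality-preserving.

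For the inequality $k\mathfrak{a}(\mathbb{U}_{\xi}(x))\leq\mathfrak{p}(\xi,x)$, I would take a pavement $\mathbb{B}$ witnessing $\mathfrak{p}(\xi,x)$ and invoke the preceding theorem to obtain a $k$-cover of $\mathbb{U}_{\xi}(x)$ of the same cardinality by compact subsets of $\mathbb{U}_{\xi}(x)$. For the reverse inequality, I would begin with a $k$-cover $\mathbb{D}$ of $\mathbb{U}_{\xi}(x)$ by compact subsets of $\mathbb{U}_{\xi}(x)$ of cardinality $k\mathfrak{a}(\mathbb{U}_{\xi}(x))$; since each such compact set is closed in the Hausdorff space $(\mathbb{U}X,\beta)$, it is of the form $\beta\B$ for some $\B\in\mathbb{F}X$, so $\mathbb{D}=\{\beta\B:\B\in\mathbb{B}\}$ for some $\mathbb{B}\subset\mathbb{F}X$ of the same cardinality, and the theorem then provides a pavement of that cardinality.

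The ``in particular'' clause is the specialization $\kappa=\omega$: $\mathfrak{p}(\xi,x)\leq\omega$ if and only if $k\mathfrak{a}(\mathbb{U}_{\xi}(x))\leq\omega$, which by definition is precisely hemicompactness of $\mathbb{U}_{\xi}(x)$.

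There is no real obstacle: all of the substantive work is packaged in the preceding theorem. The only point to check carefully is the injectivity of $\F\mapsto\beta\F$, which upgrades the bijective correspondence between pavements and $k$-covers into an equality (rather than a mere inequality) of the associated cardinal invariants.
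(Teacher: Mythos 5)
Your proposal is correct and follows exactly the route the paper intends: the corollary is an immediate consequence of the preceding theorem's correspondence between pavements at $x$ and $k$-covers of $\mathbb{U}_{\xi}(x)$ by compact sets, which the paper leaves implicit. One small remark: the injectivity of $\F\mapsto\beta\F$ is not actually needed, since each direction of the equality only requires that passing between a pavement and the associated $k$-cover (or choosing a filter $\B$ with $\beta\B=K$ for each compact $K$ in the cover) does not increase cardinality.
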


\begin{example}[A pseudotopology of countable pseudopaving number and uncountable
paving number]
 In view of Proposition \ref{prop:subsetUX}, there is a set $X$
and $S\subset\mathbb{U}X\setminus X$ where $S$ is $\sigma$-compact
but not hemicompact. Let $x_{0}\in X$ and let $\xi$ be the pseudotopology
on $X$ defined by $\mathbb{U}_{\xi}(x_{0})=\{\{x_{0}\}^{\uparrow}\}\cup S$
and $\mathbb{U}_{\xi}(x)=\{\{x\}^{\uparrow}\}$ for all $x\in X$,
$x\neq x_{0}$. In view of corollaries \ref{cor:ppavingnUx} and \ref{cor:pavingnUx},
the pseudotopology $\xi$ satisfies 
\[
\mathfrak{pp}(\xi,x_{0})=\omega<\mathfrak{p}(\xi,x_{0}).
\]
\end{example}

Comparing Theorem \ref{thm:complUX} with Corollary \ref{cor:ppavingnUx},
and Theorem \ref{thm:scomplUX} with Corollary \ref{cor:pavingnUx}
suggests a duality between the pseudopaving number on the local side
and the completeness number on the global side, and similarly between
the paving number on the local side and the strong completeness number
on the global side. As we will see, this duality is made explicit
via the $\$$-dual.

\section{\label{sec:-dual-of-a}$\$$-dual of a convergence space}

\global\long\def\rdc{\operatorname{rdc}}

\subsection{$\$$-dual convergence, reduced and erected filters}

The \emph{natural convergence} (\footnote{also often called \emph{continuous convergence}})
$[\xi,\sigma]$ on the set $C(\xi,\sigma)$ of continuous functions
from $\xi$ to $\sigma$ is the coarsest convergence on $C(\xi,\sigma)$
making the \emph{evaluation map} $ev=\left\langle \cdot,\cdot\right\rangle :|\xi|\times C(\xi,\sigma)\to|\sigma|$
(defined by $ev(x,f)=\left\langle x,f\right\rangle =f(x)$) jointly
continuous. See \cite{Binz}, \cite{BB.book}, \cite{DM.book} for
a systematic study of the natural convergence.

We focus here on the case where $\sigma$ is the Sierpi\'nski space
$\$$, where $|\$|=\{0,1\}$ and the open sets are $\emptyset$, $\{1\}$
and $\{0,1\}$. We call \emph{indicator function of }$A\subset X$
the function $\chi_{A}:X\to|\$|$ defined by $\chi_{A}(x)=0$ if and
only if $x\in A$. Then, a function $f:|\xi|\to|\$|$ is continuous
if and only if it is the indicator function of a closed subset of
$\xi$, so that we identify here the underlying set of $[\xi,\$]$
with $\xi$-closed subsets of $|\xi|$. We call $[\xi,\$]$ the $\$$-\emph{dual
of }$\xi$.

If $\A\subset[\xi,\$]$, then $\rdc\A:=\bigcup_{A\in\A}A$ and if
$\G\in\mathbb{F}|[\xi,\$]|$ then 
\[
\rdc^{\natural}\G:=\left\{ \rdc G:G\in\G\right\} ^{\uparrow}
\]
is a (possibly degenerate) filter on $X$, called \emph{reduced filter
of }$\G$. Then
\begin{prop}
\cite{DGL.kur} \label{propConvuK} Let $\xi$ be a convergence. If
$\G\in\mathbb{F}|[\xi,\$]|$ and $A\in|[\xi,\$]|$ then
\[
A\in\lm_{[\xi,\$]}\G\iff\adh_{\xi}\rdc^{\natural}\G\subset A.
\]
\end{prop}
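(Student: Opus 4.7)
The plan is to unfold the definition of the natural convergence $[\xi,\$]$ and read off pointwise what convergence to $A$ means. Since $[\xi,\$]$ is, by construction, the coarsest convergence on the continuous-function space making the evaluation $ev\colon|\xi|\times[\xi,\$]\to|\$|$ jointly continuous, the condition $A\in\lm_{[\xi,\$]}\G$ is equivalent to: for every $x\in|\xi|$ and every $\F\in\mathbb{F}|\xi|$ with $x\in\lm_{\xi}\F$, the image filter $ev[\F\times\G]$ converges in $\$$ to $\chi_{A}(x)$. This is the only ingredient I would take from the definition of the natural convergence.

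Next I would exploit the specific form of the Sierpi\'nski convergence. Since the only open neighborhood of $0$ is $|\$|$, every filter on $|\$|$ converges to $0$, whereas a filter converges to $1$ only when it contains $\{1\}$. Therefore the condition above is automatic at every $x\in A$ (where $\chi_{A}(x)=0$), and reduces, at each $x\notin A$, to the requirement that $\{1\}\in ev[\F\times\G]$. Because $ev^{-}(\{1\})=\{(y,B):y\notin B\}$, this is equivalent to the existence of $F\in\F$ and $G\in\G$ with $F\cap B=\emptyset$ for every $B\in G$, that is, $F\cap\rdc G=\emptyset$. In the language of the reduced filter, this says precisely that $\F$ does \emph{not} mesh $\rdc^{\natural}\G$.

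Collecting these two reductions, $A\in\lm_{[\xi,\$]}\G$ is equivalent to the statement that no filter $\F$ converging in $\xi$ to a point outside $A$ meshes $\rdc^{\natural}\G$. By the defining formula
\[
\adh_{\xi}\rdc^{\natural}\G=\bigcup_{\mathbb{F}|\xi|\ni\mathcal{H}\#\rdc^{\natural}\G}\lm_{\xi}\mathcal{H},
\]
this is exactly $\adh_{\xi}\rdc^{\natural}\G\subset A$, which closes the argument. The only subtlety I anticipate is the degenerate case in which some $G\in\G$ has $\rdc G=\emptyset$: then $\rdc^{\natural}\G$ is the improper filter, its adherence is empty, and both sides of the equivalence hold trivially; one just has to notice that the equivalences of the previous paragraph still go through because no $\F$ can mesh $\rdc^{\natural}\G$ in that case. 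Otherwise the proof is a direct chain of definitional equivalences in both directions.
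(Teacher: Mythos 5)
Your argument is correct. The paper does not actually prove this proposition---it is quoted from the cited reference \cite{DGL.kur}---so there is no in-text proof to compare against; but your direct verification from the definition of the natural convergence is exactly the standard one. The unfolding of $[\xi,\$]$ as ``for all $x$ and all $\F$ with $x\in\lm_{\xi}\F$, $ev[\F\times\G]\to\chi_{A}(x)$ in $\$$'' is the correct pointwise description of the coarsest convergence making $ev$ jointly continuous; the case split on $\chi_{A}(x)=0$ versus $\chi_{A}(x)=1$ matches the paper's convention that $\chi_{A}(x)=0$ iff $x\in A$ and that $\{1\}$ is the only nontrivial open set of $\$$; and the computation $ev(F\times G)\subset\{1\}\iff F\cap\rdc G=\emptyset$, hence ``$\F$ does not mesh $\rdc^{\natural}\G$,'' turns the contrapositive precisely into $\adh_{\xi}\rdc^{\natural}\G\subset A$. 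Your remark on the degenerate case where some $\rdc G=\emptyset$ is also handled correctly: both sides of the equivalence hold vacuously there.
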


In particular, if $\xi$ is a topology then 
\[
A\in\lm_{[\xi,\$]}\G\iff\bigcap_{G\in\G}\cl_{\xi}\left(\bigcup_{C\in G}C\right)\subset A,
\]
and the convergence $[\xi,\$]$ is then the \emph{upper Kuratowski
convergence }on the $\xi$-closed subsets of $|\xi|$, which has been
extensively studied, e.g., \cite{DGL.kur,Nog.shak,calbrix.alleche,mynard.uKc,DM.uK}.
This convergence can also be seen as the \emph{Scott convergence }on
the complete lattice $(\C_{\xi},\supset)$ of closed subsets of $\xi$
ordered by reversed inclusion (e.g., \cite[p. 132]{contlattices}). 

Let $(X,\xi)$ be a convergence space. If $F\subset X$, let 
\[
e(F)=\left\{ C=\cl_{\xi}C\subset F\right\} 
\]
denote the \emph{erected image of $F$. If }$\F\in\mathbb{F}X$, let
\[
e^{\natural}\F=\left\{ e(F):F\in\F\right\} ^{\uparrow}
\]
denote its \emph{erected filter }on $|[\xi,\$]|$.

It is readily seen (See e.g., \cite[Section 8]{DM.uK}) that if $\F\in\mathbb{F}|\xi|$
and $\G\in\mathbb{F}|[\xi,\$]|$ then
\begin{equation}
\rdc^{\natural}(e^{\natural}\F)\geq\F\label{eq:downofup}
\end{equation}
and 
\begin{equation}
\G\geq e^{\natural}(\rdc^{\natural}\G).\label{eq:upofdown}
\end{equation}

Thus 
\[
\rdc^{\natural}\G=\rdc^{\natural}\left(e^{\natural}(\rdc^{\natural}\G)\right)
\]
 and $[\xi,\$]$ admits at every point a pavement consisting of \emph{saturated
filters}, that is, of filters of the form $e^{\natural}(\rdc^{\natural}\G)$.
In fact, if $\mathbb{P}$ is a pavement of $[\xi,\$]$ at $A$, then
\begin{equation}
\mathbb{P}':=\left\{ e^{\natural}(\rdc^{\natural}\G):\G\in\mathbb{P}\right\} \label{eq:saturatedpavement}
\end{equation}
 is another pavement at $A$, of the same cardinality and composed
of saturated filters.

Note also \cite{mynard.strong} that 
\[
\rdc^{\natural}(e^{\natural}\F)=\intr_{\xi^{*}}^{^{\natural}}\F,
\]
where $\xi^{*}$ is an Alexandroff topology on $|\xi|$ defined by
\[
\cl_{\xi^{*}}B=\left\{ x\in|\xi|:\cl_{\xi}\{x\}\cap B\neq\emptyset\right\} =\bigcup_{b\in B}\cl_{\xi^{*}}\{b\}.
\]
 The dual Alexandroff topology $\xi^{\bullet}$ is defined by 
\[
\cl_{\xi^{\bullet}}B=\bigcup_{b\in B}\cl_{\xi}\{b\}.
\]
A subset of $|\xi|$ is $\xi^{*}$-open if and only if it is $\xi^{\bullet}$-closed.
Moreover,
\begin{equation}
\A\#\cl_{\xi^{\bullet}}^{\natural}\B\iff\cl_{\xi^{*}}^{\natural}\A\#\B.\label{eq:bulletmeshstar}
\end{equation}
We call a convergence $*$-\emph{regular }(resp. $\bullet$-regular)
if $\lim_{\xi}\cl_{\xi^{*}}^{\natural}\F=\lim_{\xi}\F$ (resp. $\lim_{\xi}\cl_{\xi^{\bullet}}^{\natural}\F=\lim_{\xi}\F$
) for every filter $\F$. In view of (\ref{eq:bulletmeshstar}), we
have:
\begin{lem}
\label{lem:adhptreg} Let $\F$ be a filter on $|\xi|$. If $\xi$
is $*$-regular then
\[
\adh_{\xi}\F=\adh_{\xi}\cl_{\xi^{\bullet}}^{\natural}\F
\]
and if $\xi$ is $\bullet$-regular, then 
\[
\adh_{\xi}\F=\adh_{\xi}\cl_{\xi^{*}}^{\natural}\F.
\]
\end{lem}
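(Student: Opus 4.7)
The plan is to treat the two inclusions separately, with one direction following from the extensivity of closure and the other from the identity~(\ref{eq:bulletmeshstar}) combined with regularity.

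First, for the easy inclusions, observe that for any $F\subset|\xi|$ one has $F\subset\cl_{\xi^{\bullet}}F$ and $F\subset\cl_{\xi^{*}}F$. Hence each element of $\cl_{\xi^{\bullet}}^{\natural}\F$ (resp.\ $\cl_{\xi^{*}}^{\natural}\F$) contains an element of $\F$, so $\cl_{\xi^{\bullet}}^{\natural}\F\leq\F$ and $\cl_{\xi^{*}}^{\natural}\F\leq\F$. Since adherence is antitone with respect to the filter order (a coarser filter meshes with more filters, hence has larger adherence), this immediately yields $\adh_{\xi}\F\subset\adh_{\xi}\cl_{\xi^{\bullet}}^{\natural}\F$ and $\adh_{\xi}\F\subset\adh_{\xi}\cl_{\xi^{*}}^{\natural}\F$, with no regularity needed.

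For the reverse inclusion in the $*$-regular case, I would take $x\in\adh_{\xi}\cl_{\xi^{\bullet}}^{\natural}\F$, so that there is a filter $\H$ on $|\xi|$ with $\H\#\cl_{\xi^{\bullet}}^{\natural}\F$ and $x\in\lim_{\xi}\H$. Applying the identity~(\ref{eq:bulletmeshstar}) with $\A=\H$ and $\B=\F$ transforms the meshing relation into $\cl_{\xi^{*}}^{\natural}\H\#\F$. Now $*$-regularity gives $\lim_{\xi}\cl_{\xi^{*}}^{\natural}\H=\lim_{\xi}\H\ni x$, so $\cl_{\xi^{*}}^{\natural}\H$ is a filter meshing with $\F$ whose $\xi$-limit contains $x$, witnessing $x\in\adh_{\xi}\F$. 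The $\bullet$-regular case is completely parallel: start with $x\in\adh_{\xi}\cl_{\xi^{*}}^{\natural}\F$ witnessed by a filter $\H$, use~(\ref{eq:bulletmeshstar}) with $\A=\F$ and $\B=\H$ to obtain $\cl_{\xi^{\bullet}}^{\natural}\H\#\F$, and conclude by $\bullet$-regularity that $x\in\lim_{\xi}\cl_{\xi^{\bullet}}^{\natural}\H$.

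There is no real obstacle here; the statement is essentially an unpacking of the duality~(\ref{eq:bulletmeshstar}) together with the defining property of $*$- and $\bullet$-regularity. The only thing to keep straight is which of $\A,\B$ in~(\ref{eq:bulletmeshstar}) plays the role of $\F$ and which plays the role of the meshing witness, and that it is the transformed filter (not $\H$ itself) whose limit one ultimately reads off using regularity.
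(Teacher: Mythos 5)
Your proposal is correct and follows essentially the same route as the paper: the easy inclusion from $\cl_{\xi^{\bullet}}^{\natural}\F\leq\F$ (resp.\ $\cl_{\xi^{*}}^{\natural}\F\leq\F$) and antitonicity of adherence, and the reverse inclusion by converting the meshing witness via~(\ref{eq:bulletmeshstar}) and reading off its limit through $*$- (resp.\ $\bullet$-) regularity. The bookkeeping of which filter plays the role of $\A$ and $\B$ in~(\ref{eq:bulletmeshstar}) is handled correctly in both cases.
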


\begin{proof}
Since $\cl_{\xi^{\bullet}}^{\natural}\F\leq\F,$ $\adh_{\xi}\F\subset\adh_{\xi}\cl_{\xi^{\bullet}}^{\natural}\F$.
On the other hand, $x\in\adh_{\xi}\cl_{\xi^{\bullet}}^{\natural}\F$
if there is $\G\#\cl_{\xi^{\bullet}}^{\natural}\F$, equivalently,
$\cl_{\xi^{*}}^{\natural}\G\#\F$, with $x\in\lim_{\xi}\G$. Since
$\xi$ is $*$-regular, $x\in\lim_{\xi}\cl_{\xi^{*}}^{\natural}\G$
and thus $x\in\adh_{\xi}\F$. The other equality is proved similarly.
\end{proof}
Moreover, 
\[
\cl_{\xi^{\bullet}}^{\natural}\F\leq\F\leq\rdc^{\natural}(e^{\natural}\F)=\intr_{\xi^{*}}^{^{\natural}}\F
\]
and $\F$ is a reduced filter if and only if both inequalities are
equalities. Note that in particular, $\cl_{\xi^{\bullet}}^{\natural}\F$
is a reduced filter and thus 
\[
\cl_{\xi^{\bullet}}^{\natural}\F=\rdc^{\natural}(e^{\natural}(\cl_{\xi^{\bullet}}^{\natural}\F)).
\]
As a consequence,
\begin{equation}
\F\geq\rdc^{\natural}(e^{\natural}(\cl_{\xi^{\bullet}}^{\natural}\F)).\label{eq:Frefinesptclosure}
\end{equation}

In particular, if $\xi$ is $T_{1}$, then $\rdc^{\natural}(e^{\natural}\F)=\F$.

\subsection{Graph induced by a convergence and case of the $\$$-dual}

A convergence $\xi$ determines a directed graph whose set of vertices
is its underlying set $X$, with the relation 

\[
y\to x\iff x\in\lm_{\xi}\{y\}^{\uparrow}.
\]

If the convergence is centered, then $x\to x$, so that there is a
loop at each vertex. If the convergence is topological then $\to$
is a transitive relation, but in general, it is not. Consider the
forward and backward neighborhoods in this graph: 
\[
N^{\to}(y):=\left\{ x:y\to x\right\} =\lm_{\xi}\{y\}^{\uparrow}\text{ and }N^{\leftarrow}(y):=\{x\in|\xi|:x\to y\}.
\]
Let us call a point $r$ of the graph a \emph{root} if $r\to x$ for
all $x\in X$, and an \emph{end }if $x\to r$ for all $x\in X$. Let
$R(\xi)$ and $E(\xi)$ denote the set of roots, and the set of ends,
of the graph induced by $\xi$, respectively. Of course, in many cases,
$R(\xi)=\emptyset$ and/or $E(\xi)=\emptyset$, but not always. For
instance $R([\xi,\$])=\{\emptyset\}$ and $E([\xi,\$])=\{|\xi|\}$. 

Let
\[
y^{\not\leftarrow}:=\{x\in|\xi|:N^{\leftarrow}(y)\cap N{}^{\leftarrow}(x)\subset R(\xi)\},\;y_{\not\rightarrow}=\{x\in:N^{\to}(y)\cap N^{\to}(x)\subset E(\xi)\}.
\]
 Given $A\subset X$, consider (\footnote{We do not introduce the dual notion using $y_{\not\to}$ instead,
only because we do not have any particular use for it here.})
\[
\cl_{\xi^{\dagger}}A=\bigcap_{c\in\bigcap_{a\in A}a^{\not\leftarrow}}c^{\not\leftarrow}.
\]

A \emph{closure operator} $c:2^{X}\to2^{X}$ satisfies $A\subset c(A)$
for all $A\subset X$, and $c(A)\subset c(B)$ whenever $A\subset B$.
It is \emph{grounded} if additionally $c(\emptyset)=\emptyset$, and
\emph{additive }if $c(A\cup B)=c(A)\cup c(B)$.
\begin{prop}
Given a convergence $\xi$, the induced operator $\cl_{\xi^{\dagger}}$
is an idempotent (not necessarily additive) closure operator. It is
grounded if and only if $\xi$ has no root.
\end{prop}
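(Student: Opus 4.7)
The plan rests on one elementary observation: the binary relation defining $y^{\not\leftarrow}$, namely $N^{\leftarrow}(y)\cap N^{\leftarrow}(x)\subset R(\xi)$, is manifestly symmetric in $x$ and $y$, so $x\in y^{\not\leftarrow}$ if and only if $y\in x^{\not\leftarrow}$. Abbreviating $B(A):=\bigcap_{a\in A}a^{\not\leftarrow}$, we have $\cl_{\xi^{\dagger}}A=\bigcap_{c\in B(A)}c^{\not\leftarrow}$, and every step below exploits this symmetry together with the fact that both $A\mapsto B(A)$ and $S\mapsto\bigcap_{c\in S}c^{\not\leftarrow}$ are inclusion-reversing.

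\textbf{Extensivity and monotonicity.} For $A\subset\cl_{\xi^{\dagger}}A$, fix $a\in A$ and $c\in B(A)$; by definition $c\in a^{\not\leftarrow}$, and the symmetry gives $a\in c^{\not\leftarrow}$, so $a$ lies in the intersection defining $\cl_{\xi^{\dagger}}A$. Monotonicity is immediate from composing two order-reversing operations: if $A\subset A'$ then $B(A')\subset B(A)$, and intersecting $c^{\not\leftarrow}$ over a smaller set yields a larger result.

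\textbf{Idempotence.} Write $C:=\cl_{\xi^{\dagger}}A$. The inclusion $C\subset\cl_{\xi^{\dagger}}C$ is extensivity. For the reverse, it suffices to verify $B(A)\subset B(C)$, since then $\cl_{\xi^{\dagger}}C=\bigcap_{d\in B(C)}d^{\not\leftarrow}\subset\bigcap_{d\in B(A)}d^{\not\leftarrow}=C$. Fix $c\in B(A)$ and an arbitrary $b\in C$: from $b\in\bigcap_{c'\in B(A)}(c')^{\not\leftarrow}\subset c^{\not\leftarrow}$ and symmetry, $c\in b^{\not\leftarrow}$, so $c\in B(C)$. The failure of additivity traces to the same chain of reversals: $B(A\cup A')=B(A)\cap B(A')$, which in general makes $\cl_{\xi^{\dagger}}(A\cup A')$ strictly larger than $\cl_{\xi^{\dagger}}A\cup\cl_{\xi^{\dagger}}A'$.

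\textbf{Groundedness.} Here $\cl_{\xi^{\dagger}}\emptyset=\bigcap_{c\in X}c^{\not\leftarrow}$. If $R(\xi)=\emptyset$, then for every $x\in X$ centering gives $x\in N^{\leftarrow}(x)$, so specializing $c=x$ produces $N^{\leftarrow}(x)\cap N^{\leftarrow}(x)=N^{\leftarrow}(x)\ni x\notin R(\xi)$, whence $x\notin x^{\not\leftarrow}$ and therefore $x\notin\cl_{\xi^{\dagger}}\emptyset$; the closure is grounded. For the converse I would argue contrapositively: starting from a root $r$, I would try to show $r\in\cl_{\xi^{\dagger}}\emptyset$ by verifying, for arbitrary $c\in X$, that every $z\in N^{\leftarrow}(c)\cap N^{\leftarrow}(r)$ lies in $R(\xi)$. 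The hard part will be this last implication: one has $z\to r$ and, since $r$ is a root, also $r\to z$, so the task is to force $z$ itself to reach every $y\in X$; I expect this to rest on a structural feature of $\to$ on principal filters that lets the chain $z\to r\to y$ collapse to $z\to y$, so that roots propagate backwards along $N^{\leftarrow}(r)$.
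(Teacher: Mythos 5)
Your handling of extensivity, monotonicity and idempotence is correct and is essentially the paper's own argument: everything comes down to the symmetry $x\in y^{\not\leftarrow}\iff y\in x^{\not\leftarrow}$ together with the antitonicity of $A\mapsto B(A):=\bigcap_{a\in A}a^{\not\leftarrow}$. (The paper proves the equality $B(\cl_{\xi^{\dagger}}A)=B(A)$; you prove only the nontrivial inclusion $B(A)\subset B(\cl_{\xi^{\dagger}}A)$, which is all that is needed.) The implication ``no root $\Rightarrow$ grounded'' is also fine; in fact your argument gives the inclusion $\cl_{\xi^{\dagger}}\emptyset\subset R(\xi)$ in general, since $x\in\cl_{\xi^{\dagger}}\emptyset$ forces $x\in x^{\not\leftarrow}$, hence $N^{\leftarrow}(x)\subset R(\xi)$, while $x\in N^{\leftarrow}(x)$ by centeredness.

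The gap is exactly where you flagged it: the reverse inclusion $R(\xi)\subset\cl_{\xi^{\dagger}}\emptyset$, i.e., that a root $r$ belongs to $c^{\not\leftarrow}$ for every $c$. Your plan needs $z\to r$ and $r\to y$ to yield $z\to y$, i.e., transitivity of $\to$ through the root, and the paper itself warns that $\to$ need not be transitive for a general convergence; there is no ``structural feature of $\to$ on principal filters'' to invoke. The paper dismisses this step with a bare ``Clearly, every root is in $c^{\not\leftarrow}$ for every $c\in X$,'' but your hesitation is justified: the step can fail. On $X=\{a,b,r\}$, let only the principal ultrafilters converge nontrivially, with $\lim\{r\}^{\uparrow}=X$, $\lim\{a\}^{\uparrow}=\{a,r\}$ and $\lim\{b\}^{\uparrow}=\{b\}$. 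Then $R(\xi)=\{r\}$, yet $N^{\leftarrow}(a)\cap N^{\leftarrow}(r)=\{a,r\}\not\subset R(\xi)$, so $r\notin a^{\not\leftarrow}$ and hence $\cl_{\xi^{\dagger}}\emptyset=\emptyset$: the operator is grounded although $\xi$ has a root. So this direction of the equivalence is provable only under an extra hypothesis such as ``$z\to r$ and $r$ a root imply $z$ a root'' (automatic when $\to$ is transitive, and in particular for $[\xi,\$]$, where $\to$ is inclusion of closed sets --- the only case the paper subsequently uses); for an arbitrary convergence you should not expect to close this gap, and only the inclusion $\cl_{\xi^{\dagger}}\emptyset\subset R(\xi)$ survives.
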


\begin{proof}
For every $A\subset|\xi|$, we have $A\subset\cl_{\xi^{\dagger}}A$
because if $a_{0}\in A$ and $c\in\bigcap_{a\in A}a^{\not\leftarrow}$,
then in particular, $c\in a_{0}^{\not\leftarrow},$ equivalently,
$a_{0}\in c^{\not\leftarrow}$. 

If $A\subset B$, then $\bigcap_{b\in B}b^{\not\leftarrow}\subset\bigcap_{a\in A}a^{\not\leftarrow}$
and thus $\cl_{\xi^{\dagger}}A\subset\cl_{\xi^{\dagger}}B$.

Note that

\begin{equation}
\bigcap_{a\in\cl_{\xi^{\dagger}}A}a^{\not\leftarrow}=\bigcap_{a\in A}a^{\not\leftarrow},\label{eq:auxdagger}
\end{equation}
so that $\cl_{\xi^{\dagger}}$ is idempotent. To see (\ref{eq:auxdagger}),
note that the inclusion $\subset$ is clear because $A\subset\cl_{\xi^{\dagger}}A$.
On the other hand, if $c\in\bigcap_{a\in A}a^{\not\leftarrow}$ and
$x\in\cl_{\xi^{\dagger}}A$, then $x\in c^{\not\leftarrow}$, equivalently,
$c\in x^{\not\leftarrow}$, which shows the reverse inclusion. 

Finally, $\cl_{\xi^{\dagger}}$ may not be grounded if $\xi$ has
roots, for $\cl_{\xi^{\dagger}}\emptyset=R(\xi)$. Indeed, if $x\in\cl_{\xi^{\dagger}}\emptyset$
then $x\in c^{\not\leftarrow}$ for every $c\in X$, so that in particular
$x\in x^{\not\leftarrow}$ and thus $x$ is a root. Clearly, every
root is in $c^{\not\leftarrow}$ for every $c\in X$. 
\end{proof}
We call a convergence $\xi$ $\dagger$-\emph{regular} if $\lim_{\xi}\F=\lim_{\xi}\cl_{\xi^{\dagger}}^{\natural}\F$
for every $\F\in\mathbb{F}|\xi|$. 

Of course, if $\xi$ is $T_{1}$ and has more than one point, then
$N^{\to}(y)=N^{\leftarrow}(y)=\{y\}$ and $E(\xi)=R(\xi)=\emptyset$
for every $y$ so that $y^{\not\leftarrow}=y_{\not\rightarrow}=X\setminus\{y\}$.
Thus 
\[
\bigcap_{a\in A}a^{\not\leftarrow}=\bigcap_{a\in A}\{a\}^{c}=\left(\bigcup_{a\in A}\{a\}\right)^{c}=A^{c}
\]
 and therefore $\cl_{\xi^{\dagger}}A=\bigcap_{c\in A^{c}}c^{\not\leftarrow}=A$.

In the upper Kuratowski convergence, note that 
\[
\lm_{[\xi,\$]}\{C\}^{\uparrow}=\left\{ D\in\C_{\xi}:C\subset D\right\} ,\text{ that is, }C\to D\iff C\subset D.
\]
Hence, $N^{\to}(C)=\left\{ D\in\C_{\xi}:C\subset D\right\} $, $N^{\leftarrow}(C)=\left\{ D\in\C_{\xi}:D\subset C\right\} $,
so that
\[
C^{\not\leftarrow}=\left\{ D\in\C_{\xi}:C\cap D=\emptyset\right\} 
\]
 and 
\[
C_{\not\rightarrow}=\{D\in\C_{\xi}:C\cup D=X\}=\{D\in\C_{\xi}:X\setminus C\subset D\}.
\]
Moreover, given $G\subset\C_{\xi}$,
\[
\cl_{[\xi,\$]^{\dagger}}G=\bigcap_{C\in\bigcap_{D\in G}D^{\not\leftarrow}}C^{\not\leftarrow}=\left\{ F\in\C_{\xi}:(C\cap\rdc G=\emptyset)\then C\cap F=\emptyset\right\} .
\]
In other words, $F\in\cl_{[\xi,\$]^{\dagger}}G$ if and only if $\O(\rdc G)\subset\O(F)$,
if and only if 
\[
F\subset\ker\O(\rdc G)=\left\{ x:\cl_{\xi}\{x\}\cap\rdc G\neq\emptyset\right\} =\cl_{\xi^{*}}\left(\rdc G\right),
\]
that is,
\begin{equation}
\cl_{[\xi,\$]^{\dagger}}G=e\left(\cl_{\xi^{*}}\left(\rdc G\right)\right).\label{eq:Gdagger}
\end{equation}
 A convergence is \emph{reciprocal }if its induced graph is symmetric,
that is, if
\[
x\to y\then y\to x.
\]
 In a reciprocal convergence $\xi$, the two topologies $\xi^{*}$
and $\xi^{\bullet}$ coincide so that $\cl_{\xi^{*}}\left(\rdc G\right)=\rdc G$
because $\rdc G$ is always $\xi^{*}$-open. Thus
\begin{prop}
\label{prop:downupdagger} If $\xi$ is reciprocal, then for every
$G\subset\C_{\xi}$, 
\[
\cl_{[\xi,\$]^{\dagger}}G=e\left(\rdc G\right).
\]
\end{prop}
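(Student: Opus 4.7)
The natural route is to exploit the identity (\ref{eq:Gdagger}),
\[
\cl_{[\xi,\$]^{\dagger}}G = e\bigl(\cl_{\xi^{*}}(\rdc G)\bigr),
\]
which has already been established for any convergence $\xi$ and any $G\subset\C_{\xi}$. The proposition will therefore follow once we establish that, under reciprocity, $\cl_{\xi^{*}}(\rdc G)=\rdc G$.

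First I would observe that $\rdc G$ is $\xi^{\bullet}$-closed for any $G\subset\C_{\xi}$, regardless of reciprocity. Indeed, each $C\in G$ is $\xi$-closed, and $y\in\cl_{\xi}\{b\}$ amounts to $b\to y$ in the induced graph, so $b\in C$ forces $\cl_{\xi}\{b\}\subset C\subset\rdc G$. The definition $\cl_{\xi^{\bullet}}B=\bigcup_{b\in B}\cl_{\xi}\{b\}$ therefore yields $\cl_{\xi^{\bullet}}(\rdc G)\subset\rdc G$, i.e. $\rdc G$ is $\xi^{\bullet}$-closed.

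Next I would verify that reciprocity collapses $\xi^{*}$ and $\xi^{\bullet}$ into a single topology. Rewriting the definitions graph-theoretically gives
\[
\cl_{\xi^{*}}B=\{x:\exists b\in B,\ x\to b\}\quad\text{and}\quad\cl_{\xi^{\bullet}}B=\{x:\exists b\in B,\ b\to x\},
\]
since $y\in\cl_{\xi}\{x\}\iff x\to y$. The symmetry $x\to b\iff b\to x$ available in a reciprocal convergence identifies these two closure operators, so $\xi^{*}=\xi^{\bullet}$. Combining with the previous paragraph, $\rdc G$ is $\xi^{*}$-closed, whence $\cl_{\xi^{*}}(\rdc G)=\rdc G$, and substitution into (\ref{eq:Gdagger}) gives $\cl_{[\xi,\$]^{\dagger}}G=e(\rdc G)$ as desired.

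There is no substantial obstacle here; the argument is essentially the bookkeeping set up by the paragraph immediately preceding the proposition. The only point requiring mild care is the asymmetry noted earlier between ``$\xi^{*}$-open'' and ``$\xi^{\bullet}$-closed'', which must be handled by working consistently with the $\xi^{\bullet}$-closure of $\rdc G$ and only then transferring to $\xi^{*}$ via the reciprocity-induced identification $\xi^{*}=\xi^{\bullet}$.
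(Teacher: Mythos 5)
Your proof is correct and is essentially the paper's own argument, which consists precisely of the two sentences preceding the proposition: $\rdc G$ is always $\xi^{*}$-open (equivalently, $\xi^{\bullet}$-closed, which you verify directly from the closedness of each $C\in G$), reciprocity identifies $\xi^{*}$ with $\xi^{\bullet}$, and hence $\cl_{\xi^{*}}(\rdc G)=\rdc G$ may be substituted into (\ref{eq:Gdagger}). Your graph-theoretic reading of the two closure operators via $y\in\cl_{\xi}\{x\}\iff x\to y$ is the same identification the paper uses implicitly to deduce $\xi^{*}=\xi^{\bullet}$ from the symmetry of the induced graph.
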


\section{\label{sec:Duality-theorems}Duality theorems}

Recall (e.g., \cite{DM.uK,DM.book}) that the \emph{epitopological
reflection }$\Epi\xi$ of a convergence $\xi$ is given by the initial
convergence for the point evaluation map $i:|\xi|\to|[[\xi,\$],\$]|$
defined by $i(x)=ev(x,\cdot)$, and for the convergence $[[\xi,\$],\$]$.

Since $[\xi,\$]=[\Epi\xi,\$]$ and $\Epi\xi$ is a always a $*$-regular
pseudotopology with closed limits (e.g., \cite[Proposition XVII.4.2]{DM.book}),
the assumption that $\xi$ be $*$-regular in the two duality theorems
below is natural and not much to ask.
\begin{thm}
\label{thm:scompldual} Let $\xi$ be a $*$-regular convergence.
Then
\[
\scompl(\xi)=\frak{p}([\xi,\$],\emptyset).
\]
\end{thm}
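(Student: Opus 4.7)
The approach is to set up a correspondence between ultracocomplete families of filters on $(X,\xi)$ and pavements at $\emptyset$ for $[\xi,\$]$, mediated by the transforms $\F\mapsto e^\natural\F$ and $\G\mapsto\rdc^\natural\G$. The key bridge is Proposition \ref{propConvuK}, which reformulates $\emptyset\in\lim_{[\xi,\$]}\G$ as $\adh_\xi\rdc^\natural\G=\emptyset$, together with the monotonicity of $e^\natural$ and $\rdc^\natural$ and the one-sided adjunction-like inequalities (\ref{eq:downofup}) and (\ref{eq:upofdown}).

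For the inequality $\mathfrak{p}([\xi,\$],\emptyset)\leq\scompl(\xi)$, given an ultracocomplete family $\mathbb{D}$ on $(X,\xi)$, I would show that $\mathbb{P}:=\{e^\natural\D:\D\in\mathbb{D}\}$ is a pavement at $\emptyset$ of the same cardinality. Each $e^\natural\D$ converges to $\emptyset$ because (\ref{eq:downofup}) gives $\rdc^\natural e^\natural\D\geq\D$ and therefore $\adh_\xi\rdc^\natural e^\natural\D\subset\adh_\xi\D=\emptyset$. For any $\G$ converging to $\emptyset$, the reduced filter $\rdc^\natural\G$ is non-adherent; in the generic (non-degenerate) case, ultracocompleteness yields $\D\in\mathbb{D}$ with $\D\leq\rdc^\natural\G$, and monotonicity of $e^\natural$ combined with (\ref{eq:upofdown}) gives $e^\natural\D\leq e^\natural\rdc^\natural\G\leq\G$. (The degenerate case $\rdc^\natural\G$ degenerate forces $\G=\{\emptyset\}^\uparrow$, and any $e^\natural\D$ refines this since $\emptyset\in e(D)$ for every $D\subset X$.) This direction does not require $*$-regularity.

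The reverse inequality $\scompl(\xi)\leq\mathfrak{p}([\xi,\$],\emptyset)$ is where $*$-regularity intervenes, and it is the main obstacle. Given a pavement $\mathbb{P}$ at $\emptyset$, the natural candidate is $\mathbb{D}:=\{\rdc^\natural\G:\G\in\mathbb{P}\}$, whose members are non-adherent by Proposition \ref{propConvuK}. For a non-adherent filter $\F$ on $X$, the naive attempt of applying the pavement property to $e^\natural\F$ to produce $\G\leq e^\natural\F$ and then descend via $\rdc^\natural\G\leq\rdc^\natural e^\natural\F$ fails, because (\ref{eq:downofup}) gives $\F\leq\rdc^\natural e^\natural\F$ (the wrong direction): one only obtains a filter finer than $\F$, not coarser. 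The remedy is to work with the reduction $\H:=\cl_{\xi^\bullet}^\natural\F$ in place of $\F$. This filter satisfies the three properties one needs: $\H\leq\F$; $\adh_\xi\H=\adh_\xi\F=\emptyset$ by Lemma \ref{lem:adhptreg}, which is precisely where $*$-regularity enters; and $\H$ is a reduced filter, so $\rdc^\natural e^\natural\H=\H$. Hence $e^\natural\H$ converges to $\emptyset$, the pavement produces $\G\in\mathbb{P}$ with $\G\leq e^\natural\H$, and monotonicity of $\rdc^\natural$ yields
\[
\rdc^\natural\G\leq\rdc^\natural e^\natural\H=\H\leq\F,
\]
showing $\mathbb{D}$ is ultracocomplete and completing the proof.
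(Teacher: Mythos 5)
Your proposal is correct and follows essentially the same route as the paper: the transforms $\D\mapsto e^{\natural}\D$ and $\G\mapsto\rdc^{\natural}\G$ mediate both directions, with the reverse inequality passing through $\cl_{\xi^{\bullet}}^{\natural}\F$ and Lemma \ref{lem:adhptreg} exactly as in the paper's proof (your observation that $\cl_{\xi^{\bullet}}^{\natural}\F$ is a reduced filter is precisely how the paper derives (\ref{eq:Frefinesptclosure})). The only additions are your explicit handling of the degenerate case $\G=\{\emptyset\}^{\uparrow}$ and the remark that $*$-regularity is needed only for one inequality, both of which the paper leaves implicit.
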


\begin{proof}
Let $\mathbb{D}$ be a strongly cocomplete collection of non-adherent
filter on $|\xi|$. We claim that the collection 
\[
\left\{ e^{\natural}\D:\D\in\mathbb{D}\right\} 
\]
is a pavement of $[\xi,\$${]} at $\emptyset$. That $\emptyset\in\lm_{[\xi,\$]}e^{\natural}\D$
for every $\D\in\mathbb{D}$ follows from (\ref{eq:downofup}) and
$\adh_{\xi}\D=\emptyset$. If now $\emptyset\in\lim_{[\xi,\$]}\G$
then $\adh_{\xi}\rdc^{\natural}\G=\emptyset$, so that there is $\D\in\mathbb{D}$
with $\rdc^{\natural}\G\geq\D$, because $\mathbb{D}$ is strongly
cocomplete. Hence, 
\[
\G\geq e^{\natural}(\rdc^{\natural}\G)\geq e^{\natural}\D
\]
by (\ref{eq:upofdown}). 

Conversely, let $\mathbb{P}$ be a pavement of $[\xi,\$]$ at $\emptyset$.
Let 
\[
\mathbb{D}=\left\{ \rdc^{\natural}\P:\P\in\mathbb{P}\right\} .
\]
Because $\emptyset\in\lm_{[\xi,\$]}\P$, $\adh_{\xi}\rdc^{\natural}\P=\emptyset$
for every $\P\in\mathbb{P}$. If moreover $\F$ is another filter
on $|\xi|$ with $\adh_{\xi}\F=\emptyset$, then $\adh_{\xi}\cl_{\xi^{\bullet}}^{\natural}\F=\emptyset$
by Lemma \ref{lem:adhptreg} and thus $\emptyset\in\lim_{[\xi,\$]}e^{\natural}(\cl_{\xi^{\bullet}}^{\natural}\F)$
by (\ref{eq:downofup}). Therefore, there is $\P\in\mathbb{P}$ with
$e^{\natural}(\cl_{\xi^{\bullet}}^{\natural}\F)\geq\P$, and thus
$\rdc^{\natural}(e^{\natural}(\cl_{\xi^{\bullet}}^{\natural}\F))\geq\rdc^{\natural}\P$.
Since $\F\geq\rdc^{\natural}(e^{\natural}(\cl_{\xi^{\bullet}}^{\natural}\F))$
by (\ref{eq:Frefinesptclosure}), we conclude that $\mathbb{D}$ is
strongly cocomplete.
\end{proof}
We call a family $\mathbb{P}$ of filters converging to $x$ a $\dagger$-\emph{pseudopavement
at} $x$ if for every $\F\in\mathbb{F}X$ with $x\in\lm_{\xi}\F$
there is $\P\in\mathbb{P}$ with $\P\#\cl_{\xi^{\dagger}}^{\natural}\F$.
Let $\frak{pp^{\dagger}}(\xi,x)$ denote the smallest cardinality
of a $\dagger$-pseudopavement at $x$. Of course, every pseudopavement
is a $\dagger$-pseudopavement because $\cl_{\xi^{\dagger}}^{\natural}\F\leq\F$
for every $\F$, so that $\frak{pp}^{\dagger}(\xi,x)\leq\frak{pp}(\xi,x)$.
If $\xi$ is $T_{1}$ then $\cl_{\xi^{\dagger}}A=A$ for all $A$
and thus $\frak{pp}^{\dagger}(\xi,x)=\frak{pp}(\xi,x)$. On the other
hand, without any separation, the inequality may be strict:
\begin{example}[We may have $\frak{pp}^{\dagger}(\xi)=1$ and $\frak{pp}(\xi,x)$
arbitrary large]
 Consider the ultrafilter convergence $\xi$ of the antidiscrete
topology of an infinite set $X$. Then every point of $X$ is a root
(and an end) and thus $\cl_{\xi^{\dagger}}A=X$ for every $A\subset X$.
As a result, any choice of one ultrafilter $\U_{0}$ forms a $\dagger$-pseudopavement
(at any point), for $\lim\U_{0}=X$ and for any other ultrafilter
$\U$, we have $\cl_{\xi^{\dagger}}^{\natural}\U=\{X\}$ so that $\U_{0}\#(\cl_{\xi^{\dagger}}^{\natural}\U)$.
Hence $\frak{pp}^{\dagger}(\xi,x)=1$ at every $x\in X$. On the other
hand, $\frak{pp}(\xi,x)\geq|X|$. Indeed a pseudopavement (at any
point $x_{0}$) is composed of ultrafilters on $X$, and for every
$y\in X$, $x_{0}\in\lim\{y\}^{\uparrow}$ but the only ultrafilter
meshing with $\{y\}^{\uparrow}$ is $\{y\}^{\uparrow}$. Thus, a pseudopavement
needs to contain at least all principal ultrafilters and is thus of
cardinality at least $\card X$.
\end{example}

\begin{thm}
\label{thm:compldual} Let $\xi$ be a $*$-regular convergence. Then
\[
\compl(\xi)=\frak{pp}^{\dagger}([\xi,\$],\emptyset).
\]
\end{thm}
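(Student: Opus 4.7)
The plan is to mimic the proof of Theorem \ref{thm:scompldual}, exchanging collections of non-adherent filters on $(X,\xi)$ with pseudopavements of $[\xi,\$]$ at $\emptyset$ via the operators $\rdc^{\natural}$ and $e^{\natural}$, now reading the $\dagger$-pseudopavement condition through the identity $\cl_{[\xi,\$]^{\dagger}}(G)=e(\cl_{\xi^{*}}(\rdc G))$ from (\ref{eq:Gdagger}), together with the elementary observation that $\rdc(e(F))=\{x\in|\xi|:\cl_{\xi}\{x\}\subset F\}$.

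For the inequality $\compl(\xi)\leq\frak{pp}^{\dagger}([\xi,\$],\emptyset)$, given a $\dagger$-pseudopavement $\mathbb{P}$ of $[\xi,\$]$ at $\emptyset$, I would set $\mathbb{D}=\{\rdc^{\natural}\P:\P\in\mathbb{P}\}$. Each $\rdc^{\natural}\P$ is non-adherent by Proposition \ref{propConvuK}. For $\F\in\mathbb{F}X$ with $\adh_{\xi}\F=\emptyset$, (\ref{eq:downofup}) gives $\emptyset\in\lim_{[\xi,\$]}e^{\natural}\F$, so some $\P\in\mathbb{P}$ satisfies $\P\#\cl_{[\xi,\$]^{\dagger}}^{\natural}e^{\natural}\F$. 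Unfolding the $\dagger$-closure via (\ref{eq:Gdagger}) and the observation above, for each $P\in\P$ and $F\in\F$ there is a $\xi$-closed $C\in P$ with $C\subset\cl_{\xi^{*}}(\{x:\cl_{\xi}\{x\}\subset F\})$. Choosing any $c\in C$ and then $y\in\cl_{\xi}\{c\}\cap\{x:\cl_{\xi}\{x\}\subset F\}$, $\xi$-closedness of $C$ forces $y\in C$ and membership in the kernel forces $y\in F$, so $C\cap F\neq\emptyset$ and hence $\rdc^{\natural}\P\#\F$.

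The reverse inequality $\frak{pp}^{\dagger}([\xi,\$],\emptyset)\leq\compl(\xi)$ is the main obstacle. The naive candidate $\{e^{\natural}\D:\D\in\mathbb{D}\}$ nearly works: from cocompleteness applied to $\rdc^{\natural}\G$ one gets $\D\#\rdc^{\natural}\G$, and a witness $d\in D\cap\rdc G$ lies in some $\xi$-closed $C_{0}\in G$, making $\cl_{\xi}\{d\}$ a $\xi$-closed subset of $\cl_{\xi^{*}}(\rdc G)$; the trouble is that $\cl_{\xi}\{d\}$ need not sit inside $D$. I would repair this by passing to $\mathbb{D}':=\{\cl_{\xi^{\bullet}}^{\natural}\D:\D\in\mathbb{D}\}$. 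By Lemma \ref{lem:adhptreg} and $*$-regularity, each $\cl_{\xi^{\bullet}}^{\natural}\D$ remains non-adherent, and because $\cl_{\xi^{\bullet}}^{\natural}\D\leq\D$, meshing with any $\F$ is preserved, so $\mathbb{D}'$ is still cocomplete of cardinality at most $|\mathbb{D}|$. Applying cocompleteness of $\mathbb{D}'$ to $\rdc^{\natural}\G$ for a $\G$ with $\emptyset\in\lim_{[\xi,\$]}\G$, a witness $d\in\cl_{\xi^{\bullet}}(D)\cap\rdc G$ takes the form $d\in\cl_{\xi}\{a\}$ for some $a\in D$ and $d\in C_{0}$ for some $\xi$-closed $C_{0}\in G$; then $F:=\cl_{\xi}\{d\}$ is $\xi$-closed with $F\subset\cl_{\xi}\{a\}\subset\cl_{\xi^{\bullet}}(D)$ and $F\subset C_{0}\subset\cl_{\xi^{*}}(\rdc G)$, hence $F\in e(\cl_{\xi^{\bullet}}(D))\cap\cl_{[\xi,\$]^{\dagger}}(G)$, yielding the meshing $e^{\natural}(\cl_{\xi^{\bullet}}^{\natural}\D)\#\cl_{[\xi,\$]^{\dagger}}^{\natural}\G$ required for a $\dagger$-pseudopavement.

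In short, the principal difficulty is arranging that a closed witness for the dual meshing lies simultaneously inside the image of a base set of $\D$ and inside $\cl_{\xi^{*}}(\rdc G)$; $\bullet$-saturating $\mathbb{D}$, which is legitimate precisely because of $*$-regularity, forces $\xi$-closures of points of $D$ to remain inside $D$ and so dissolves the obstruction.
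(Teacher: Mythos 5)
Your proof is correct and runs on the same machinery as the paper's --- the adjunction (\ref{eq:downofup})--(\ref{eq:upofdown}) between $e^{\natural}$ and $\rdc^{\natural}$, the identity (\ref{eq:Gdagger}), and Lemma \ref{lem:adhptreg} --- but you place the $\bullet$-saturation on the opposite side of the duality, and the difference is substantive. For $\compl(\xi)\leq\frak{pp}^{\dagger}([\xi,\$],\emptyset)$ the paper takes $\mathbb{D}=\{\cl_{\xi^{\bullet}}^{\natural}(\rdc^{\natural}\P):\P\in\mathbb{P}\}$ and transfers the meshing through (\ref{eq:bulletmeshstar}), which is where it invokes $*$-regularity (via Lemma \ref{lem:adhptreg}) to keep the saturated filters non-adherent; your element-chase proves the stronger conclusion $\rdc^{\natural}\P\#\F$ directly, so you need neither the saturation nor the regularity hypothesis in this direction (non-adherence of $\rdc^{\natural}\P$ being immediate from Proposition \ref{propConvuK}). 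For the reverse inequality the paper takes $\mathbb{P}=\{e^{\natural}\D:\D\in\mathbb{D}\}$ and passes from $\D\#\rdc^{\natural}\G$ to $(e^{\natural}\D)\#(e^{\natural}(\rdc^{\natural}\G))$ in a single step; as you observe, a point $d\in D\cap\rdc G$ only guarantees the nonempty closed witness $\cl_{\xi}\{d\}$ inside $\rdc G$, not inside $D$, so that step is not justified for non-$T_{1}$ convergences unless one lets the empty closed set serve as a meshing witness in $\C_{\xi}$, which would trivialize the meshing conditions throughout. Your replacement of $\mathbb{D}$ by $\{\cl_{\xi^{\bullet}}^{\natural}\D:\D\in\mathbb{D}\}$ --- cocompleteness preserved since $\cl_{\xi^{\bullet}}^{\natural}\D\leq\D$, non-adherence preserved by Lemma \ref{lem:adhptreg}, and exactly what is needed to trap $\cl_{\xi}\{d\}\subset\cl_{\xi}\{a\}\subset\cl_{\xi^{\bullet}}(D)$ --- supplies the missing justification, so your write-up is in fact tighter than the paper's at this point. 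The one convention you share with the paper, and should state explicitly, is that meshing witnesses in $|[\xi,\$]|$ are understood to be nonempty closed sets: your phrase ``choosing any $c\in C$'' in the first half presumes the witness $C$ of $P\cap\cl_{[\xi,\$]^{\dagger}}(e(F))\neq\emptyset$ is nonempty, just as the paper does implicitly everywhere.
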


\begin{proof}
Let $\mathbb{D}$ be a cocomplete family of non-adherent filters on
$|\xi|$, and let $\mathbb{P}=\{e^{\natural}\D:\D\in\mathbb{D}\}$.
Each element of $\mathbb{P}$ converges to $\emptyset$ for $[\xi,\$]$.
Let $\G$ be another filter with $\emptyset\in\lim_{[\xi,\$]}\G$,
that is, $\adh_{\xi}\rdc^{\natural}\G=\emptyset$. Since $\mathbb{D}$
is cocomplete, there is $\D\in\mathbb{D}$ with $\D\#\rdc^{\natural}\G$,
so that $(e^{\natural}\D)\#(e^{\natural}(\rdc^{\natural}\G)$. In
view of (\ref{eq:Gdagger}), $e^{\natural}(\rdc^{\natural}\G)\geq\cl_{\xi^{\dagger}}^{\natural}\G$,
and we conclude that $\mathbb{P}$ is a $\dagger$-pseudopavement
of $[\xi,\$]$ at $\emptyset$.

Conversely, let $\mathbb{P}$ be a $\dagger$-pseudopavement of $[\xi,\$]$
at $\emptyset$. Then
\[
\mathbb{D}=\left\{ \cl_{\xi^{\bullet}}^{\natural}(\rdc^{\natural}\P):\P\in\mathbb{P}\right\} 
\]
is a cocomplete collection of non-adherent filters on $|\xi|$. Indeed,
every $\rdc^{\natural}\P\in\mathbb{D}$ is non-adherent because $\emptyset\in\lim_{[\xi,\$]}\P$
and thus every $\cl_{\xi^{\bullet}}^{\natural}(\rdc^{\natural}\P)$
is non-adherent by $*$-regularity, using Lemma \ref{lem:adhptreg}.
Moreover, if $\F$ is a non-adherent filter on $|\xi|$ then $\emptyset\in\lim_{[\xi,\$]}e^{\natural}\F$,
so that there is $\P\in\mathbb{P}$ with $\P\#\cl_{\xi^{\dagger}}^{\natural}\left(e^{\natural}\F\right)$.
Moreover, $\cl_{\xi^{\dagger}}^{\natural}\left(e^{\natural}\F\right)=e^{\natural}\left(\cl_{\xi^{*}}^{\natural}\left(\rdc^{\natural}(e^{\natural}\F)\right)\right)$
by (\ref{eq:Gdagger}). Then $(\rdc^{\natural}\P)\#\cl_{\xi^{*}}^{\natural}\left(\rdc^{\natural}(e^{\natural}\F)\right)$,
equivalently, $\cl_{\xi^{\bullet}}^{\natural}(\rdc^{\natural}\P)\#\rdc^{\natural}(e^{\natural}\F)$.
Since $\rdc^{\natural}(e^{\natural}\F)\geq\F$, we conclude that $\cl_{\xi^{\bullet}}^{\natural}(\rdc^{\natural}\P)\#\F.$
\end{proof}
As a result, we see that \cite[Theorem 10.1]{D.covers} stating 
\[
\tag{false}\compl(\xi)=\frak{p}([\xi,\$],\emptyset)
\]
is erroneous because, as we have seen, there are countably complete
topological spaces that are not countably ultracomplete, so that 
\[
\compl(\xi)=\omega<\scompl(\xi)=\frak{p}([\xi,\$],\emptyset).
\]
Yet the statement is corrected by the pair of theorems \ref{thm:scompldual}
and \ref{thm:compldual}. 

\bibliographystyle{plain}

\end{document}